\documentclass[journal,twoside,web]{ieeecolor}
\usepackage{generic}
\usepackage[utf8]{inputenc}
\usepackage{graphicx}
\usepackage{amsmath}
\usepackage{longtable,tabularx}
\usepackage{graphicx}      
\usepackage{amssymb}
\usepackage{amsmath}
\usepackage{wrapfig}
\usepackage{pgfplots}
\pgfplotsset{compat=1.18}
\usepgfplotslibrary{fillbetween,patchplots}
\usepgfplotslibrary{groupplots}
\usepgfplotslibrary{colormaps}
\usetikzlibrary{decorations.pathreplacing,arrows.meta,positioning}
\usepackage{amsmath} 
\usepackage{amsfonts}
\let\theoremstyle\relax

\newtheorem{theorem}{Theorem}
\newtheorem{assumption}{Assumption}
\newtheorem{proposition}{Proposition}[section]
\newtheorem{definition}{Definition}[section]

\newtheorem{remark}{Remark}

\newtheorem{corollary}{Corollary}

\usepackage{float} 
\usepackage{comment}

\usepackage{tikz}
\usetikzlibrary{shapes,arrows,calc,positioning}
\tikzstyle{bigblock} = [draw, fill=blue!20, rectangle, 
    minimum height=6em, minimum width=8em]
\tikzstyle{medblock} = [draw, fill=blue!20, rectangle, 
    minimum height=4em, minimum width=4em]    
\tikzstyle{mux} = [draw, fill=black!20, rectangle, 
    minimum height=5em, minimum width=0.1em]    
\tikzstyle{smallblock} = [draw, fill=blue!20, rectangle, 
    minimum height=2em, minimum width=3em]
    
\tikzstyle{data_block} = [draw, fill=green!20, rectangle, 
    minimum height=2em, minimum width=3em]
\tikzstyle{ops_block} = [draw, fill=blue!20, rectangle, 
    minimum height=2em, minimum width=3em]    
\tikzstyle{est_block} = [draw, fill=red!20, rectangle, 
    minimum height=2em, minimum width=3em]    
    
\tikzstyle{sum} = [draw, fill=blue!20, circle, node distance=1cm,minimum height=0.5cm]
\tikzstyle{signal} = [coordinate]
\tikzstyle{pinstyle} = [pin edge={to-,thin,black}]
\tikzstyle{block} = [draw, fill=blue!20, rectangle, 
    minimum height=3em, minimum width=9em]
\tikzstyle{blockS} = [draw, fill=blue!20, rectangle, 
    minimum height=3em, minimum width=4em]    
\tikzstyle{input} = [coordinate]
\tikzstyle{output} = [coordinate]
\usetikzlibrary{matrix}

\usetikzlibrary{positioning}
\usetikzlibrary{math}

\usepackage{hyperref}
\hypersetup{
    colorlinks,
    linkcolor={blue!100!black},
    citecolor={blue!50!black},
    urlcolor={blue!80!black}
}

\newcommand{\bc}{\begin{center}}
\newcommand{\ec}{\end{center}}
\newcommand{\benum}{\begin{enumerate}}
\newcommand{\eenum}{\end{enumerate}}
\newcommand{\nn}{\nonumber}
\newcommand{\matl}{\left[ \begin{array}}
\newcommand{\matr}{\end{array} \right]}

\renewcommand{\matl}{\begin{bmatrix}}
\renewcommand{\matr}{\end{bmatrix}}

\newcommand{\matls}{\left[ \begin{smallmatrix}}
\newcommand{\matrs}{\end{smallmatrix} \right]}
\newcommand{\isdef}{\stackrel{\triangle}{=}}

\newcommand{\inv}{^{-1}}

\newcommand{\half}{\tfrac{1}{2}}

\newcommand{\rmI}{{\rm I}}

\newcommand{\rmT}{{\rm T}}

\newcommand{\rmd}{{\rm d}}

\newcommand{\BBR}{{\mathbb R}}

\newcommand{\SB}{{\mathcal B}}
\newcommand{\SC}{{\mathcal C}}

\newcommand{\SF}{{\mathcal F}}
\newcommand{\SG}{{\mathcal G}}

\newcommand{\SSS}{{\mathcal S}}

\newcommand{\SV}{{\mathcal V}}

\let\labelindent\relax
\usepackage{enumitem,amssymb}
\newlist{todolist}{itemize}{2}
\setlist[todolist]{label=$\square$}
\usepackage{pifont}
\usepackage{amsmath}
\usepackage{float}
\usepackage{stfloats}
\usepackage{amssymb}
\usepackage{tabularx}
\usepackage[table,xcdraw]{xcolor}
\usepackage{array}
\DeclareMathOperator{\atanh}{atanh}
\newcommand{\jp}[1]{\textcolor{blue}{#1}}
\renewcommand{\jp}[1]{\textcolor{black}{#1}}
\usepackage{hyperref}
\newcommand{\rotation}[2]{ {\substack{#1 \\ {\boldsymbol{\longrightarrow}} \\ #2}} }

\usepackage{booktabs}

\begin{document}



\title{A Constraint-Lifting Framework for \\ Safe and Stable Nonlinear Control}

\author{Jhon Manuel Portella Delgado
\thanks{Jhon Manuel Portella is a Postdoctoral Research Fellow with the Department of Aerospace Engineering, University of Michigan, Ann Arbor, MI 48109, USA.
This work was completed while he was a Ph.D. candidate in the Department of Mechanical Engineering, University of Maryland, Baltimore County, MD 21250, USA.
{\tt jportella@umbc.edu}}
and Ankit Goel
\thanks{Ankit Goel is an Assistant Professor in the Department of Mechanical Engineering, University of Maryland, Baltimore County, MD 21250.
{\tt ankgoel@umbc.edu}}}                                      
\maketitle
\begin{abstract}                          
    This paper presents a constraint-lifting control framework for designing stabilizing controllers that guarantee the forward invariance of a prescribed safe set.
    State-of-the-art safety-enforcing methods, such as control barrier functions (CBFs) and model predictive control (MPC), typically rely on solving constrained optimization problems in real time and therefore may not yield an explicit control law that guarantees constraint satisfaction under all conditions.
    In contrast, the proposed approach develops an explicit control law for a class of nonlinear systems that ensures both asymptotic stabilization of a desired equilibrium and safety preservation of a user-defined set.
    The central idea is to lift the constrained state space into an unbounded domain using a sigmoid-based diffeomorphic mapping, synthesize the controller in the transformed coordinates, and then map it back to the original coordinates.
    To address numerical conditioning near constraint boundaries, a special class of Lyapunov candidate functions, called \textit{sigmoid integral functions}, is introduced.
    A rigorous stability analysis, based on the Barbashi-Krasovskii-LaSalle invariance principle, establishes asymptotic convergence and safety guarantees.
    The efficacy of the proposed controller is demonstrated through a safe attitude-control problem.

\end{abstract}

\begin{keywords}
    safe control, 
    constraint-lifting control, 
    nonlinear systems,
    forward invariance, 
    safe attitude control
\end{keywords}                           

    




\section{Introduction}
\jp{
The problem of enforcing state and output constraints in nonlinear systems has received significant attention in control theory due to its importance in safety-critical applications in aerospace and robotics.
}
Classical control theory primarily addresses the stabilization and command-following problems. 
State-of-the-art approaches for handling state constraints often use techniques such as saturation functions \cite{graichen2010handling} and reference governors \cite{garone2017reference, wang2023balanced, gilbert1995discrete, gilbert1999fast}.
\jp{
However, designing a controller that enforces state constraints while guaranteeing stability remains a challenging problem. %
While effective methods exist, they often involve practical trade-offs, such as the use of auxiliary constraint-enforcement layers (e.g., CBF-based filters), reliance on online optimization (e.g., MPC), or sensitivity near constraint boundaries (e.g., in BLF-based designs).
}
\jp{
A fundamental challenge arises when the constrained state has a relative degree greater than one with respect to the control input. In this case, existing approaches typically enforce safety through auxiliary constructions that impose additional constraints on higher-order states, yielding a forward invariant set that is a subset of the original safe set.
}
\jp{
This observation motivates the following question: can one guarantee forward invariance of the original safe set for systems subject to higher relative-degree constraints, without introducing auxiliary constraints?
}
This paper thus addresses the \textit{state-constrained control problem} by developing a control framework that ensures satisfaction of user-specified state constraints while maintaining closed-loop stability.
\jp{
In particular, this paper shows that such non-conservative invariance can be achieved through a constraint-lifting transformation combined with a Lyapunov-based design.
}

\begin{figure}[t]
    \centering
    \begin{tikzpicture}
\begin{scope}[shift={(0.5,-1)}] 
\begin{axis}[
    view={45}{30},          
    domain=-3:3,
    y domain=-4:4,
    samples=40,
    samples y=40,
    axis lines=none,
    colormap={cartoongray}{
        rgb=(0.95,0.95,0.95)
        rgb=(0.80,0.80,0.80)
        rgb=(0.60,0.60,0.60)
        rgb=(0.40,0.40,0.40)
    },
    shader=interp,
    opacity=0.9,
    z buffer=sort
]

\addplot3[
    surf,
    draw=black!40,
    line width=3pt
] {0.5*x^2 + ln(cosh(y/0.5))};

\draw[->, thick, red] (axis cs:0,0,0) -- (axis cs:-3.5,1.8,0) node[below=3pt] {$z_1$};
\draw[->, thick, red] (axis cs:0,0,0) -- (axis cs:0,3.5,0) node[below=3pt] {$z_2$};

\addplot3[
    very thick,
    smooth,
    green!70!black,
    mark=none
] coordinates {
    (0,0,10.5)
    (2,1,6.8)
    (-1,-1,6.8)
    (1,0.5,2.5)
    (0,0,0)
};

\addplot3[only marks, mark=*, mark options={fill=green!70!black}] coordinates {(0,0,10.5)};
\addplot3[only marks, mark=*, mark options={fill=blue}] coordinates {(0,0,0)};

\end{axis}
\end{scope}

\node at (6,3.8) {\parbox{2.2cm}{\centering\textbf{Unconstrained\\dynamics}}};

\draw[fill=pink!20,draw = black, thick] (-1,-1) -- (0,-2) -- (1,-1) -- (0,0) -- cycle;


\node[above,yshift=1pt] at (0.4,0) {\textbf{Safe Set}};

\node[below, yshift= -50pt ,xshift=30pt] at (0,0) {\parbox{2.2cm}{\centering\textbf{Constrained\\dynamics}}};

\draw[green!70!black, very thick, smooth] plot coordinates {
    (-0.5,-1.2)
    (-0.3,-0.4)
    (-0.1,-1.85)
    (0.3,-1.4)
    (0,-1)
};

\fill[green!70!black] (-0.5,-1.2) circle (2pt);
\fill[blue] (0,-1) circle (2pt);

\draw[->, thick, red] (0,-1) -- (-1.2,0.2) node[below=3pt] {$x_1$}; 
\draw[->, thick, red] (0,-1) -- (1.2,0) node[below=3pt] {$x_2$};    

\draw[->, thick, blue, bend left=40, >=Stealth] 
    (0.8,-1.3) .. controls (5,-1.5) .. (4,0.3) 
    node[midway, below] {$\phi$};

\draw[->, thick, blue, bend left=30, >=Stealth] 
    (2,1) .. controls (-1.5,0.5) .. (-0.5,0) 
    node[midway, above] {$\psi$};

\end{tikzpicture}
    \caption{Enforcing forward invariance via the constraint lifting framework.}
    \label{fig:constraint_sketch}
\end{figure}
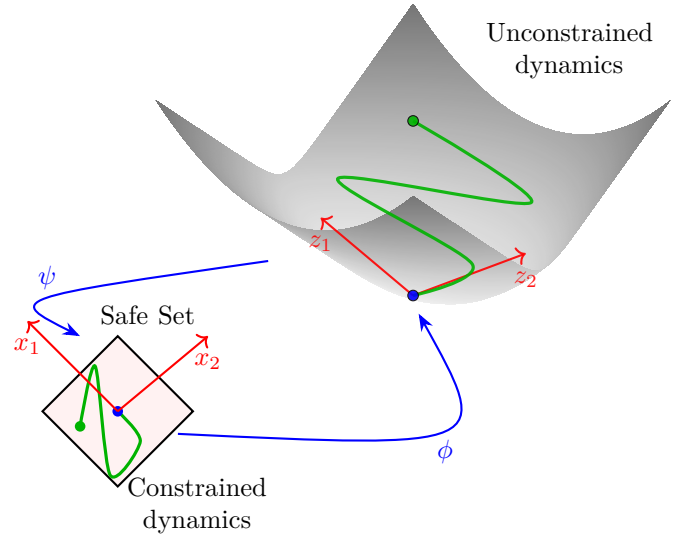

\jp{While optimal control frameworks can be formulated to handle constraints, constraint satisfaction in such approaches is enforced through the feasibility of an associated optimization problem, which also optimizes a performance criterion. }
\jp{Model predictive control (MPC), a subclass of optimal control techniques, provides a systematic framework that integrates constraint handling with performance optimization through an online optimization problem \cite{kim2019backstepping,kim2018integration}.}
However, MPC presents several practical challenges in implementation. %
First, solving the full nonlinear optimization problem can be computationally demanding, potentially limiting its applicability in real-time embedded implementations \cite{SCHLAGENHAUF20207026}. %
To address this, linearization techniques are often employed to reduce the optimization problem to a quadratic program \cite{wang2020}; however, this approach introduces modeling approximations. %
Furthermore, because constraints are enforced over a prediction horizon, modeling errors arising from these approximations can affect constraint satisfaction. %
Finally, because solving the optimization problem at each sampling instant is computationally expensive, maintaining performance guarantees in fast or highly dynamic applications can be challenging.

Incorporating constraints into Lyapunov-based formulations represents an alternative approach to handling constraints. 
Nevertheless, the presence of constraints usually introduces singularities in the resulting control synthesis.
Barrier Lyapunov functions (BLFs), a class of these methods, provide a framework to handle constraints by using Lyapunov candidate functions that grow unbounded near constraint boundaries \cite{li2023adaptive,tee2009barrier,soukkou2023tuning}. 
\jp{Although BLFs can theoretically prevent constraint violations, they typically introduce singular terms into the resulting controller that grow unbounded as the state approaches the constraint boundary, which can lead to numerical sensitivity and large control inputs\cite{ngo2005integrator,niu2024adaptive}.}
Mitigating these singularities requires careful gain tuning, and stability guarantees often rely on conservative assumptions \cite{Sadeghzadeh2023}.
%
%
Furthermore, as shown in \cite{yin2019barrier,li2020barrier,tee2011control}, the BLF framework enforces constraints on the error states rather than directly on the system states, thereby limiting its application. 

Recently, control barrier functions (CBFs) have emerged as an alternative tool for enforcing safety constraints
\cite{di2014stabilizing,zhu2024adaptive,taylor2022safe,Kim2023,Kim2025}.
CBFs \cite{romdlony2016stabilization,ames2019control,wu2019control} are based on Nagumo's invariance principle \cite{nagumo1942lage,blanchini1999set}, which introduced the notion of \textit{lower integrals} to guarantee positive forward invariance of a prescribed set.
In particular, Nagumo's invariance principle yields a \textit{safety inequality} that ensures the system's state remains within the prescribed safe set if the inequality is satisfied. 
Note that the CBFs themselves do not generate a control signal; instead, they modify a control signal to enforce safety.
In practice, the CBFs are implemented as discrete-time safety filters that, at each step, solve a constrained optimization problem to find a signal as close as possible to the nominal control signal while satisfying the safety inequality \cite{ames2016control}. 
Since the CBFs are necessarily implemented using sample data, this complicates the closed-loop stability analysis. 
Ad-hoc techniques such as fixed margins can be used to mitigate inter-sample violations, but they generally degrade performance. 
Control Lyapunov functions (CLFs), similar to CBFs, have been proposed to guarantee simultaneous safety and stability, but these approaches ultimately rely on solving a constrained optimization problem to compute the control signal. 
\jp{Furthermore, when the constrained variable has relative degree greater than one with respect to the control input, the construction of control barrier functions becomes nontrivial and typically requires higher-order barrier formulations \cite{ong2024rectified}. 
Such formulations enforce safety through multiple barrier conditions, resulting in an invariant set defined by the intersection of several constraints, which may introduce conservatism. 
This structural limitation motivates the development of alternative approaches that can enforce invariance of the original constraint set directly.}

\begin{table*}[t]
\centering
\rowcolors{2}{gray!15}{white}
\begin{tabularx}{\textwidth}{|
    >{\centering\arraybackslash}p{1.5cm}|
    >{\arraybackslash}p{4.5cm}|
    >{\arraybackslash}X|
    >{\arraybackslash}X|}
\hline
\rowcolor{cyan!70!black}
\textcolor{white}{\textbf{Method}} & \textcolor{white}{\textbf{Central Idea}} & \textcolor{white}{\textbf{Limitations}} & \textcolor{white}{\textbf{Contrast with Proposed Method} }
\\
\hline
\textbf{MPC} &
Embeds constraints in an online optimization framework. &
High computational cost; requires solving a nonlinear or quadratic program at each step; prone to inter-sample violations. &
\jp{Provides an explicit control law without online optimization; invariance guarantees are established in continuous time.} \\
\hline
\textbf{BLF} &
Uses Lyapunov functions that diverge near constraint boundaries. &
Contains singular terms causing numerical ill-conditioning; constraints are often applied to error states. &
\jp{Employs sigmoid-integral Lyapunov functions to mitigate numerical growth near constraint boundaries.} \\
\hline
\textbf{CBF} &
Uses Nagumo’s principle to ensure forward invariance. &
\jp{Requires solving QPs at every instant. 
Furthermore, for constraints with relative degree greater than one, higher-order barrier constructions are required, leading to multiple barrier conditions and invariant sets defined through their intersection, which may introduce conservatism.} &
\jp{Yields an analytic control law with continuous-time invariance and stability guarantees under stated assumptions. 
Handles constraints on states with relative degree greater than one directly within the control design, without auxiliary higher-order barrier constructions.} \\
\hline
\textbf{State Transformation} &
Maps constrained space to unconstrained via a nonlinear diffeomorphism. &
Mappings diverge near boundaries, causing numerical instability and compounded ill-conditioning. &
\jp{Uses sigmoid-based maps with Lyapunov-based compensation to mitigate numerical growth near constraint boundaries.} \\
\hline
\multicolumn{4}{|c|}{\textbf{Comparison Across Methods}} \\
\hline
\textbf{Invariant Set Property} &
\jp{Set whose forward invariance is guaranteed by the method.} &
\jp{For higher relative degree constraints, the invariant set is defined through auxiliary conditions and is generally a subset of the original constraint set.} &
\jp{Guarantees forward invariance of the original safe set without introducing auxiliary constraints or set reduction.} \\
\hline
\end{tabularx}
\caption{
Comparison of constraint-enforcing control techniques with the proposed method. 
\jp{The table highlights key differences in constraint enforcement and invariant set guarantees, particularly for states with relative degree greater than one.}
}
\label{tab:comparison_methods}
\end{table*}

An alternative approach, which also motivates the constraint-lifting framework developed in this paper, uses state transformations to embed constraints into the system dynamics \cite{guo2014backstepping, huang2020output, zhu2024adaptive}. 
In particular, continuous functions that diverge to infinity at the constraint boundaries are used to transform the system dynamics. 
However, these nonlinear mappings may induce numerical instability near the constraint boundaries.
Furthermore, constraints that are applied to the entire state result in the composition of several such nonlinear functions, leading to even more pronounced numerical instabilities. 
Ad hoc techniques, such as switching controllers, have been investigated to address numerical instability \cite{sampei2003nonlinear}.

This paper introduces the constraint-lifting framework for designing a control law that simultaneously enforces constraints and guarantees closed-loop stability for a class of nonlinear systems. 
\jp{Similar to state-transformation techniques, the proposed framework converts a state-constrained control problem into an equivalent unconstrained one, as shown in Figure \ref{fig:constraint_sketch}, by \textit{lifting} the constrained state space into an unconstrained state space, thereby converting a control problem defined over a constrained domain into an equivalent problem in which the transformed state evolves without explicit constraints.}
In contrast to existing approaches, however, this work introduces a novel Lyapunov function that mitigates the numerical instabilities commonly observed in prior methods.
Specifically, a controller is synthesized for the unconstrained system using the classical backstepping approach with the proposed Lyapunov function, called the \textit{sigmoid integral function}.
This function exhibits quadratic behavior near the origin and linear growth away from it, a key property that eliminates the exponentially growing terms responsible for numerical ill-conditioning near constraint boundaries in conventional Lyapunov-based control designs.
Unlike MPC and CBF-based approaches, the proposed method yields an explicit control law rather than requiring the solution of an optimization problem at each sampling instant, thereby eliminating inter-sample constraint violations.
Furthermore, unlike BLF- and state-transformation-based techniques, the proposed method avoids the use of singular terms, thereby preventing numerical ill-conditioning.
The comparison of existing constraint-enforcing control techniques with the proposed approach is summarized in Table \ref{tab:comparison_methods}.
\jp{Note that the results are established under nominal model assumptions; robustness to uncertainty is not addressed in this work.}

The main contributions of this paper are as follows. 
First, a constraint-lifting, sigmoid-based diffeomorphic map is introduced to reformulate the constrained control problem as an equivalent unconstrained one.
Second, a Lyapunov function, called the sigmoid integral function, is proposed, which exhibits quadratic behavior near the origin and linear behavior away from it, thereby eliminating exponentially growing terms introduced by the state transformation.
Third, a rigorous stability analysis of the resulting closed-loop system is provided.
Finally, the effectiveness of the proposed method is demonstrated through the constrained attitude control problem.
\jp{
The development in this paper focuses on a two-level system structure for clarity of presentation.
Nevertheless, the proposed framework extends to systems with higher relative degree, in which the control synthesis follows a recursive structure analogous to backstepping, thereby increasing design complexity.
}

The paper is organized as follows. 
Section \ref{sec:problem formulation and preliminaries} describes the notation, formulates the control problem, states the necessary assumptions, and introduces the \textit{constraint-lifting function} and the \textit{sigmoid integral functions} used to construct the safety-enforcing controller.
Section \ref{sec:controller} presents the main result and the stability analysis of the closed-loop system.
Section \ref{sec:justification_sigmoid_integral} presents the motivation for introducing the sigmoid integral functions.
Section \ref{sec:example} demonstrates the application of the proposed method in the constrained attitude control problem.
Finally, the paper concludes with a discussion in Section \ref{sec:conclusions}.

\section{Problem Formulation and Preliminaries}
\label{sec:problem formulation and preliminaries}
This section establishes the notation, formulates the control problem, and introduces the sigmoid-function-based mappings used in the controller synthesis.

\subsection{Notation}
\label{sec:notation}
Let $q \in \mathbb{R}^n,$ define
\begin{align}
    D(q)
        &\isdef
            \text{diag}
            \left(
                q_1,
                q_2,
                \ldots,
                q_n
            \right)
            \in 
            \mathbb{R}^{n \times n}.
\end{align}
If, for $i=1,\ldots, n,$ $q_i \neq 0,$ then define
    \begin{align}
        D_\rmI(q) 
            &\isdef 
                {\rm diag}(q_1\inv, \ldots , q_n\inv) \in \BBR^{n \times n}.
        \label{eq:Diagonal_inverse}
    \end{align}
Note that $D_\rmI(q)\inv = D(q).$
%
The set of all $q \in \BBR^n$ such that $|q_i| < 1$ for all $i$ is denoted by $\SC$, equivalently,
\begin{align}
    \SC 
        \isdef
            \{x \in \BBR^n \colon \| x\|_\infty < 1 \}.
\end{align}
Note that $\SC$ is a unit norm ball induced by the infinity norm.
The set of $n$-dimensional real vectors with strictly positive components is denoted by $\BBR^n_{+}$, that is, 
\begin{align}
    \BBR^n_{+}
        \isdef
            \{ x \in \mathbb{R}^n : x_i > 0,\ \forall i = 1, \dots, n \}.
\end{align}

\subsection{System Description}
\label{sec:problem_formulation}
Consider the system 
\begin{align}
    \dot x_1 
        &= 
            g_1(x_1) x_2,
    \label{eq:x1_dot}
    \\
    \dot x_2 
        &= 
            f_2(x_1, x_2) + g_2(x_1, x_2) u,
    \label{eq:x2_dot}
\end{align}
where $x_1, x_2 \in \mathbb{R}^n$ denote the states, 
and $u \in \mathbb{R}^n$ is the control input.
Let $g_1:\mathbb{R}^n \to \mathbb{R}^{n \times n},$ $f_2:\mathbb{R}^n \times \mathbb{R}^n \to \mathbb{R}^n,$ and $g_2:\mathbb{R}^n \times \mathbb{R}^n \to \mathbb{R}^{n \times n}$ be continuously differentiable functions of sufficient order.

The objective is to design a control law that asymptotically \jp{stabilizes} a desired state $x_{1\rmd}$ and ensures that the system states remain bounded within user-specified limits.
Specifically, the states $(x_1, x_2)$ are required to remain inside a safe set $\SSS$ defined as
\begin{align}
\SSS
    &\isdef
        \{
        (x_1, x_2) \in \BBR^n \times \BBR^n \colon
        |x_1| < \overline{x}_1,
        |x_2| < \overline{x}_2
        \},
    \label{eq:safe_set}
\end{align}
where $\overline{x}_1, \overline{x}_2 \in \BBR^n_{+}$ denote the user-specified upper bounds on $x_1$ and $x_2$, respectively.
In other words, the objective is to design a control law that renders $(x_{1d},0)$ is asymptotically stable and guarantees the forward invariance of $\SSS.$
\jp{
In addition, the objective is to preserve forward invariance of the original safe set $\SSS$ directly, without introducing auxiliary constraints on higher-order states.
}

The following assumptions are required for controller synthesis. 
\begin{assumption}
    \label{ass:function_zero}
    The function $f_2(x_1,x_2) = 0$ if and only if $x_2 = 0.$
\end{assumption}

\begin{assumption}
    \label{ass:function_invert}
    The matrices $g_1(x_1)$ and $g_2(x_1,x_2)$ are nonsingular for $(x_1,x_2) \in \SSS.$ 
\end{assumption}

\jp{
\begin{remark}
Assumption \ref{ass:function_zero} is used to ensure that the set $\{\dot V = 0\}$ corresponds only to the desired equilibrium in the Lyapunov analysis. In particular, it guarantees that the condition $f_2(x_1,x_2)=0$ implies $x_2=0$, which is required to conclude asymptotic convergence via the invariance principle.
If this condition is relaxed, the set $\{\dot V = 0\}$ may contain additional trajectories, and the invariance analysis requires a more detailed characterization of the resulting invariant set. 
This does not fundamentally alter the control design but introduces additional technical steps to establish convergence properties.
\end{remark}
}

\jp{
\begin{remark}
Assumption \ref{ass:function_invert} ensures that the input mappings remain nonsingular within the safe set $\SSS$, which is required for the well-posedness of the control law and the invertibility of the associated transformations in the control design. 
This condition guarantees that the control input has authority over all state directions within $\SSS$, and is closely related to the notion of local controllability or full actuation in nonlinear control systems. 
Such assumptions are standard in inversion-based methods, including backstepping and feedback linearization. 
In practice, this implies that the safe set is selected to exclude configurations where control effectiveness is lost.
\end{remark}
}

\subsection{Conservatism in Higher-Order Barrier Methods}

\jp{Control barrier functions (CBFs) provide a systematic framework for enforcing forward invariance of a prescribed safe set based on Nagumo's condition. 
For constraints with relative degree one, the resulting safety condition depends directly on the control input, allowing the invariance condition to be enforced through a single inequality constraint.}

\jp{
However, when the constrained state has a relative degree greater than one with respect to the control input, the standard CBF construction is no longer directly applicable, since the control input does not appear in the first derivative of the constraint function.
}

\jp{
To address this limitation, higher-order control barrier function (HOCBF) formulations introduce additional constraints by recursively differentiating the barrier function until the control input explicitly appears. 
As a result, a sequence of barrier conditions must be satisfied simultaneously to ensure forward invariance.
Consequently, the forward invariant set is defined by the intersection of multiple constraint sets, each corresponding to a higher-order derivative condition.
This intersection imposes additional restrictions on higher-order states, such as velocities, even when the original constraint is defined only in terms of position.
Therefore, the resulting invariant set may be a strict subset of the original constraint set.
This effect introduces a form of conservatism, since admissible trajectories that satisfy the original constraint may be excluded by the additional derivative-based conditions.
}

\jp{
In contrast, the objective of this work is to enforce forward invariance of the original constraint set directly, without introducing auxiliary constraints on higher-order states.
}

\subsubsection{Illustrative Double-Integrator Example}
\label{sec:double_integrator_example}
\jp{To illustrate the conservatism introduced by higher-order barrier constructions, consider the double integrator}
\begin{align}
    \dot x_1 = x_2, \quad
    \dot x_2 = u,
    \label{eq:doubleIntegrator}
\end{align}
\jp{with the state constraint $h(x) \isdef 1 - x_1^2 \ge 0.$}
\jp{The corresponding safe set is}
\begin{align}
    \SC_0 \isdef \{x \in \mathbb R^2 : h(x) \ge 0\}
    =
    \{x \in \mathbb R^2 : |x_1| \le 1\}.
\end{align}

\jp{Since \(h\) has relative degree two with respect to the input \(u\), the control input does not appear in \(\dot h\), and a standard first-order CBF condition cannot be imposed directly. 
A higher-order barrier construction therefore introduces an additional constraint involving \(x_2\), which defines another set \(\SC_1\). The resulting forward invariant set is then given by
$\SC_0 \cap \SC_1,$
rather than by \(\SC_0\) alone. In particular, \(\SC_1\) imposes state-dependent restrictions on the velocity \(x_2\), even though the original safety constraint is posed only on \(x_1\). Therefore, the forward invariant set under the higher-order barrier construction is not simply \(h(x) \ge 0\), and may be a strict subset of the original safe set.
Figure \ref{fig:hocbf_double_integrator} shows the forward invariant set under a higher-order barrier construction, which shows that the admissible set is smaller than the original safe set $\SC_0$.
}

\begin{figure}[t]
    \centering
    \includegraphics[width=\columnwidth]{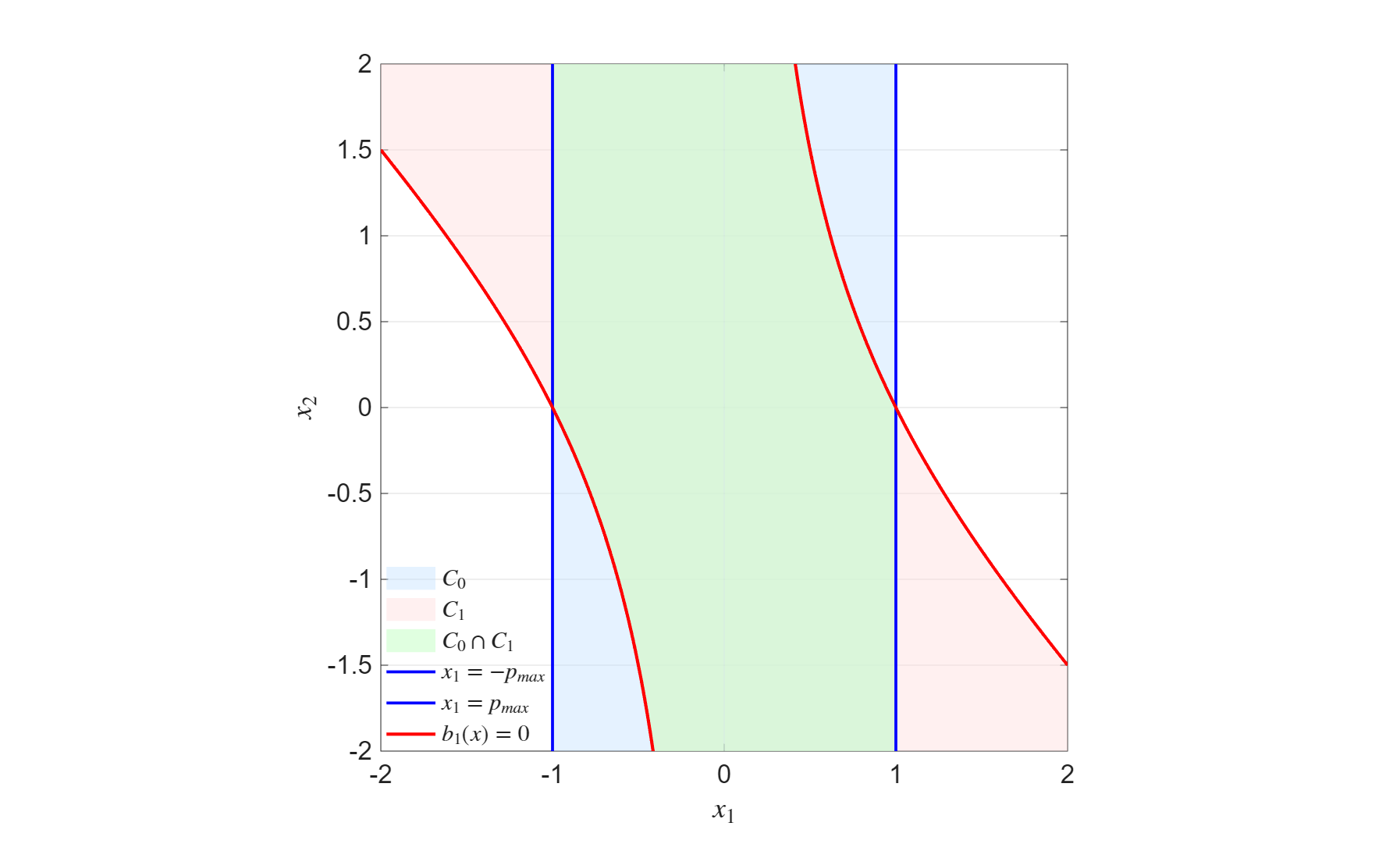}
    \caption{
    \jp{Forward invariant set for the double integrator under a higher-order barrier construction. 
    The admissible set is $\SC_0 \cap \SC_1$, which may be a strict subset of $\SC_0$.}
    }
    \label{fig:hocbf_double_integrator}
\end{figure}

\subsection{Sigmoid and Associated Functions}
\label{sec:sigmoid}
This section provides a brief review of the \textit{sigmoid functions} used to transform the state $x$ for the construction of a control law that ensures asymptotic stability of the desired state $x_{1\rmd}$ and guarantees forward invariance of $\SSS$.
In addition, these sigmoid functions are used to construct the \textit{constraint-lifting function} and to define a class of Lyapunov functions, called \textit{sigmoid integral functions}, that will later facilitate the controller synthesis.
\subsubsection{Sigmoid Functions}

We revisit the definition and key properties of \textit{sigmoid functions}, presented in \cite{menon1996characterization}. 
\jp{In particular, we will require the sigmoid function to be strictly increasing in order to ensure the existence of its inverse.}
\begin{definition}[\textbf{Simple sigmoids}]
    A function $\sigma \colon \BBR \to (-1,1)$ is a \textit{simple sigmoid} if it satisfies the following conditions.
    \begin{enumerate}
        \item $\sigma$ is a smooth function, that is, $\sigma(x)$ \jp{$\in$} $C^{\infty}.$
        \item $\sigma$ is an odd function, that is, $\sigma(-x) = -\sigma(x).$
        \item $\sigma$ has $y = \pm 1$ as horizontal asymptotes, that is, $\lim_{x \to \infty} \sigma(x) = 1.$
        \item $\sigma(x)/x$ is a completely convex function in $(0,1).$
    \end{enumerate}
\end{definition}

The following result, reproduced from \cite{menon1996characterization}, establishes that if the simple sigmoid is strictly increasing, then its inverse exists. 

\begin{proposition}
    Let $y = \sigma(x)$ be a strictly increasing simple sigmoid, that is, $\sigma'(x) > 0$ for all $x \in \mathbb{R}$, where $\sigma'$ denotes the derivative of $\sigma$. %
    Then,
    \begin{enumerate}
        \item $\eta \isdef \sigma^{-1}: (-1,1) \to \mathbb{R}$ exists.
        \item $\eta(y)$ is a strictly increasing function, analytic in the interval $(-1,1).$
        \item $\eta'(y) = 1/\sigma'(\eta(y)),$ where $\eta'$ and $\sigma'$ denote the first derivatives of $\eta$ and $\sigma,$ respectively.
        \item $\eta(y)/y$ is absolutely monotone in $(0,1).$
    \end{enumerate}
\end{proposition}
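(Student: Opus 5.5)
We prove the four items in order. Items 1 and 3 follow from elementary real analysis and the inverse function theorem. Items 2 and 4 need the analytic structure coming from condition 4 of the definition of a simple sigmoid. Throughout, write $\eta$ for $\sigma^{-1}$.

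\textbf{Item 1: existence of $\eta$.} Since $\sigma'(x)>0$ for all $x$, $\sigma$ is strictly increasing, hence injective on $\BBR$. It remains to show that $\sigma(\BBR)=(-1,1)$. By oddness and the horizontal asymptotes, $\lim_{x\to\infty}\sigma(x)=1$ and $\lim_{x\to-\infty}\sigma(x)=-1$. Strict monotonicity then gives $-1<\sigma(x)<1$ for every $x$. Continuity and the intermediate value theorem show that every value in $(-1,1)$ is attained. Hence $\sigma\colon\BBR\to(-1,1)$ is a bijection and $\eta\colon(-1,1)\to\BBR$ exists.

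\textbf{Item 3 and the monotonicity part of item 2.} Because $\sigma$ is smooth with nonvanishing derivative, the inverse function theorem shows that $\eta$ is $C^\infty$. Differentiating $\sigma(\eta(y))=y$ gives $\sigma'(\eta(y))\,\eta'(y)=1$, which is item 3. Since $\sigma'>0$, this also gives $\eta'(y)>0$, so $\eta$ is strictly increasing.

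\textbf{Analyticity (item 2) and item 4.} These are the substantive parts. We take ``completely convex'' in the Widder sense: $(-1)^k \tfrac{d^{2k}}{dt^{2k}}\,\sigma(t)/t\ge 0$ for all $k$. Bernstein--Widder-type theorems then imply that $\sigma(x)/x$ extends to an entire function, so $\sigma$ itself is real analytic. The real-analytic inverse function theorem then gives analyticity of $\eta$ on $(-1,1)$.

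For item 4 we argue via the Taylor coefficients at $0$. Oddness of $\sigma$ gives $\eta(y)/y=\sum_{k\ge0} b_k y^{2k}$. We must show $b_k\ge0$ and that the series converges on $(0,1)$. By Bernstein's theorem, these two facts together are equivalent to absolute monotonicity on $(0,1)$. To obtain the signs, we apply Lagrange inversion to $\sigma(x)=x\,h(x)$, where $h(x)=\sigma(x)/x$. The complete convexity of $h$ makes the coefficients of $h$ alternate in sign, as in $\tanh$, whose inverse $\operatorname{atanh}$ has all coefficients positive. We then need to show that Lagrange inversion of such an alternating series produces nonnegative coefficients.

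\textbf{Main obstacle.} This sign argument is the hard step, and we expect it to be the main difficulty. The radius of convergence is less problematic: $\eta$ blows up only as $y\to\pm1$, so it should be $1$. Since the proposition is quoted from \cite{menon1996characterization}, we would defer to that reference for the combinatorial sign lemma. Our first attempt at proving it would be to show $\eta^{(k)}(y)\ge 0$ on $(0,1)$ directly. This would proceed by induction, using $\eta'=1/\sigma'\circ\eta$ together with the log-concavity and monotonicity of $\sigma'$ on $(0,\infty)$ that follow from complete convexity.
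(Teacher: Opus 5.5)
Your arguments for item 1, item 3, and the monotonicity half of item 2 are correct. The paper itself offers no proof here; it simply quotes the result from \cite{menon1996characterization}. The genuine gap is item 4, and the route you sketch for it cannot work.

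The sign lemma you want to borrow is false. It says that inverting $x(1-c_1x^2+c_2x^4-\cdots)$ with all $c_k\ge 0$ yields nonnegative coefficients. Matching powers gives
\[
\eta(y)=y+c_1y^3+(3c_1^2-c_2)\,y^5+O(y^7),
\]
so any $c_2>3c_1^2$ produces a negative coefficient, and then $\tfrac{d^4}{dy^4}\bigl(\eta(y)/y\bigr)<0$ for small $y>0$. Alternation of the coefficients of $\sigma(x)/x$ is therefore not the mechanism.

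Your fallback induction also stalls. Monotonicity and log-concavity of $\sigma'$ do give $\eta''=-\sigma''/\sigma'^3\ge0$ and $\eta'''=(3\sigma''^2-\sigma'\sigma''')/\sigma'^5\ge0$. However,
\[
\eta^{(4)}=\frac{10\,\sigma'\sigma''\sigma'''-15\,\sigma''^3-\sigma'^2\sigma^{(4)}}{\sigma'^7}
\]
involves $\sigma^{(4)}$, which those two properties do not control. Moreover, a hypothesis posed on $(0,1)$ gives no sign information about $\sigma'$ on $[1,\infty)$, while $\eta$ on $(0,1)$ samples $\sigma$ on all of $(0,\infty)$.

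More fundamentally, under your (Widder) reading of ``completely convex'' the hypothesis does not imply item 4 at all, so the step you defer cannot be supplied.

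First, as you observe, that reading forces $\sigma(x)/x$ to be entire. This already excludes $\tanh$, since $\tanh(x)/x$ has poles at $\pm i\pi/2$, so your appeal to $\tanh$ as the model case is inconsistent with your own reading.

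Second, here is a counterexample within your reading. Let $\mu$ be a symmetric probability measure on $[-\pi/2,\pi/2]$ with Lipschitz density $g$, $g(0)>0$, and $\hat\mu(t)=\int\cos(tu)\,d\mu(u)>0$ for all $t$. Set $\sigma(x)=\frac{1}{\pi g(0)}\int_0^x\hat\mu(t)\,dt$. This $\sigma$ is entire, odd, strictly increasing, and tends to $\pm1$, and
\[
\frac{\sigma(x)}{x}=\frac{1}{\pi g(0)}\int_{-\pi/2}^{\pi/2}\int_0^1\cos(xus)\,ds\,d\mu(u)
\]
is completely convex on $(0,1)$ because $|xus|\le\pi/2$ there. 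After dividing by $\sigma'(0)$ one finds $c_1=M_2/6$ and $c_2=M_4/120$ with $M_j=\int u^j\,d\mu$. So the $y^5$ coefficient of $\eta$ has the sign of $10M_2^2-M_4$.

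Now choose $\mu$ as a convolution square $\rho*\rho$. Let $\rho$ be symmetric and spread uniformly over short intervals inside $[-\pi/4,\pi/4]$, with mass $1-a$ near $0$ and mass $a/2$ near each of $\pm\pi/4$. Then mix in a small triangular density of generic width to remove the real zeros of $\hat\rho^{\,2}$. This gives $M_4/M_2^2\approx 1/(2a)+3/2>10$ for small $a$, so the $y^5$ coefficient is negative.

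So item 4 requires the stronger hypothesis actually used in \cite{menon1996characterization}. As written, your proposal proves items 1--3 but leaves item 4 unproved.
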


\subsubsection{Constraint-Lifting Function}
\label{sec:mapping function}
Using the strictly increasing sigmoid functions presented above, we define the \textit{constraint-lifting function}, which is used to convert a constrained control problem into an equivalent unconstrained problem.
In particular, the constraint-lifting function $\phi \colon \SC \to \BBR^n$ is constructed using the inverse of a strictly increasing sigmoid function, thereby providing a diffeomorphic transformation from the constrained state $x$ to the unconstrained state $z,$ which will be defined in Section \ref{sec:controller}.
Similarly, the mapping $\psi \colon \BBR^n \to \SC$, used to recover $x$ from $z$, is constructed using the strictly increasing sigmoid function.

In particular, the constraint-lifting function is 
\begin{align}
    \phi(\chi) \isdef 
        \matl
            \eta_1(\chi_1) &
            \eta_2(\chi_2) &
            \ldots &
            \eta_n(\chi_n)
        \matr^{\rmT} 
        \in 
        \BBR^n,
    \label{eq:phi_general}
\end{align}
where each $\eta_i$ is the inverse of a strictly increasing sigmoid function.
\jp{The functions $\eta_i$ may, in general, be chosen independently for each component.}
Note that the constraint-lifting function maps an element from $\SC$ to $\BBR^n.$
Similarly, the inverse of the constraint-lifting function is 
\begin{align}
    \psi(z) \isdef 
        \matl
            \sigma_1(z_1) &
            \sigma_2(z_2) &
            \ldots &
            \sigma_n(z_n)
        \matr^{\rmT} 
        \in 
        \BBR^n,
    \label{eq:psi_general}
\end{align}
where each $\sigma_i$ is the inverse of $\eta_i.$
Table \ref{tab:sat_functions} lists several strictly increasing sigmoid functions and their inverses in the second and the first column, respectively, and Figure \ref{fig:saturation_functions} shows these functions and their inverses.

\begin{table}[ht]
    \centering
    \resizebox{\columnwidth}{!}
    {
    \renewcommand{\arraystretch}{2} 
    \rowcolors{2}{gray!15}{white}
    \begin{tabularx}{1.07\columnwidth}{|>{\centering\arraybackslash}X|
                                      >{\centering\arraybackslash}X|
                                      >{\centering\arraybackslash}X|}
    \hline
    \rowcolor{cyan!70!black}
    \textcolor{white}{$\phi(x) = \eta(x)$} 
    & 
    \textcolor{white}{$\psi(z) = \sigma(z)$}
    &
    \textcolor{white}{$\SV(\zeta)$}
    \\
    \hline
        $ 
            \tan
            \left(
                \dfrac
                {
                    \pi
                }
                {
                    2
                }
                x
            \right)
        $
        & 
        $
            \dfrac
            {
                2
            }
            {
                \pi
            } 
            \text{atan}
            \left(
                z
            \right)
        $ 
        &
        $
            \zeta\,
            \text{atan}
            \left(
                \zeta
            \right)
            - \dfrac{1}{2}
            \log
            \left(
                1 
                + 
                \zeta^2
            \right)
        $
    \\
    \hline
        $
            \text{atanh}
            \left(
                x
            \right)
        $
        &
        $  
            \tanh
            \left(
                z
            \right)
        $
        &
        $
            \log
            \left(
                \cosh(\zeta)
            \right)
        $
    \\
    \hline
        $
            \dfrac{
                x
            }
            {
                1 - |x|
            }
        $
        &
        $
            \dfrac{
                z
            }
            {
                1 + |z|
            }
        $
        &
        $
                |\zeta|
                - \log
                \left(
                    1 + |\zeta|
                \right)
        $
    \\
    \hline
        $
            \dfrac
            {
                x
            }
            {
                \sqrt{1 - x^2}
            }
        $
        &
        $
            \dfrac
            {
                z
            }
            {
                \sqrt{1 + z^2}
            }
        $
        &
        $
                \sqrt{1 + \zeta^2}
                - 1
        $
    \\
    \hline
        $
            \dfrac{2}{\sqrt{\pi}}
            \text{erf}^{-1}
            \left(
                x
            \right)
        $
        &
        $
            \text{erf} 
            \left( 
                \dfrac{\sqrt{\pi}}{2} z
            \right)
        $
        &
        $
            \zeta\,
            \text{erf}
            \left(
                \dfrac{\sqrt{\pi}}{2}
                \zeta
            \right)
            +
            \dfrac{2}
            {
                \pi
            }
            \left(
                e^{-\dfrac{\pi}{4}\zeta^2}
                - 1
            \right)
        $
    \\
    \hline
        $
            \dfrac{2}{\pi}
            {\rm asinh}
            \left(
                \tan
                    \left(
                        \dfrac
                        {
                            \pi
                        }
                        {
                            2
                        }
                        x
                    \right)
            \right)
        $
        &
        $
            \dfrac{2}{\pi} \text{atan} 
            \left( 
                \sinh{ \left( \dfrac{\pi}{2} z \right) } 
            \right)
        $
        &
        $
                \dfrac{2}{\pi}
                \zeta
                \,
                \text{atan}
                \Big(
                    \sinh
                    {
                        \dfrac{\pi}{2}
                        \zeta
                    }
                    -
                    \displaystyle\int_0^{\dfrac{\pi}{2}\zeta}
                    r 
                    \text{sech}(r)
                    \rmd r
                \Big)
        $
    \\
    \hline
    \end{tabularx}
    }
    \caption{
    Various sigmoid functions, their invereses, and the corresponding sigmoid integral function.
    }
    \label{tab:sat_functions}
\end{table}

\begin{figure}[ht]
    \centering
    \includegraphics[width=1\columnwidth]{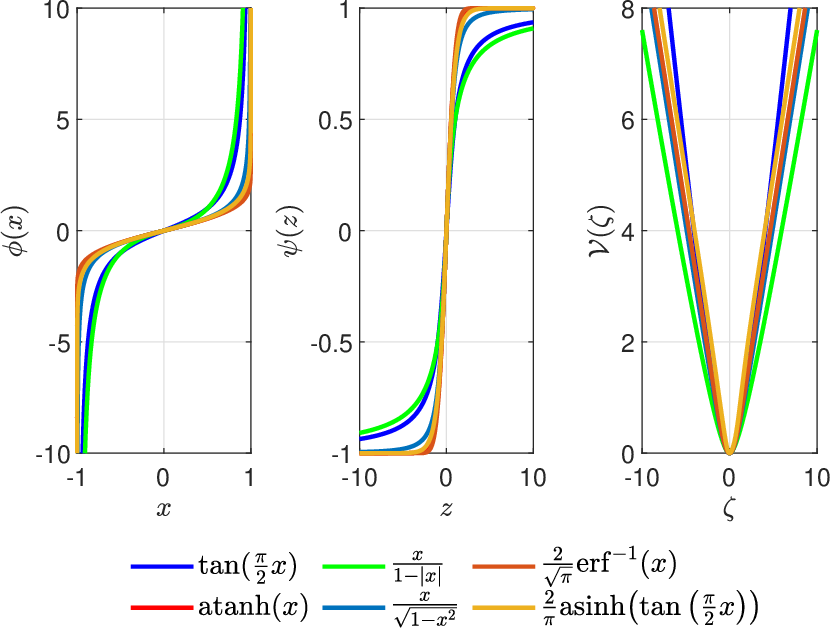}
    \caption{
        Various sigmoid functions, their invereses, and the corresponding sigmoid integral function.
        Note that the legend includes only the $\phi(x)$ functions, while the associated $\psi(z)$ and $\SV(\zeta)$ functions are shown using the same color scheme to maintain visual consistency.
    }
    \label{fig:saturation_functions}
\end{figure}

\begin{remark}
    For the controller synthesis in Section \ref{sec:controller} and the example in Section \ref{sec:example}, the same constraint-lifting function is used for both state variables to simplify the presentation.
    However, the components of the constraint-lifting function, that is, the sigmoid functions $\sigma_i$ in \eqref{eq:phi_general} and their corresponding inverses $\eta_i$ in \eqref{eq:psi_general} may be selected differently depending on the application.
\end{remark}

\begin{remark}
    Note that the constraint-lifting function is defined using the inverse of the sigmoid function, whereas its own inverse is constructed using the sigmoid function itself.
    This clarification is emphasized to avoid confusion arising from the use of the term ``inverse'' in both contexts.
\end{remark}

The following property of the constraint-lifting function is used in the stability analysis of the closed-loop system, presented in Section \ref{sec:stability_analysis}.
\begin{proposition}
    \label{rm:partial_invert_sat}
    Consider the constraint-lifting function \eqref{eq:phi_general}.
    Then, for all $x\in\SC,$ the Jacobian $\partial_x \phi(x)$ is nonsingular.
\end{proposition}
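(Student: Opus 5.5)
The plan is to exploit the componentwise structure of $\phi$ in \eqref{eq:phi_general}. Since the $i$-th component of $\phi(x)$ depends only on $x_i$, the Jacobian is diagonal:
\begin{align*}
    \partial_x \phi(x) = D\big(\matl \eta_1'(x_1) & \cdots & \eta_n'(x_n) \matr^{\rmT}\big).
\end{align*}
The off-diagonal entries vanish because $\partial \eta_i(x_i)/\partial x_j = 0$ for $j \neq i$. Nonsingularity is therefore equivalent to $\eta_i'(x_i) \neq 0$ for every $i$ and every $x \in \SC$.

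Next I will check that each entry is well defined. For $x \in \SC$ we have $|x_i| < 1$, so $x_i \in (-1,1)$, which is the domain of $\eta_i$. By item 2 of the earlier proposition on strictly increasing simple sigmoids, $\eta_i$ is analytic on $(-1,1)$, so $\eta_i'(x_i)$ exists.

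The key step uses item 3 of that proposition: $\eta_i'(y) = 1/\sigma_i'(\eta_i(y))$. Each $\sigma_i$ is assumed strictly increasing in the sense $\sigma_i'(s) > 0$ for all $s \in \BBR$. Hence $\sigma_i'(\eta_i(x_i))$ is a finite positive number, and $\eta_i'(x_i) > 0$.

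Consequently the determinant $\prod_i \eta_i'(x_i)$ is strictly positive, and $\partial_x\phi(x)$ is nonsingular. Its inverse is $D_\rmI$ of the same vector, or equivalently $D(\matl \sigma_1'(\eta_1(x_1)) & \cdots & \sigma_n'(\eta_n(x_n)) \matr^{\rmT})$, which is the Jacobian of $\psi$ evaluated at $\phi(x)$.

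There is no serious obstacle. The only point needing care is that strict positivity of $\sigma_i'$ must be taken from the hypothesis $\sigma'>0$ and not merely from monotonicity. A function that is monotone but not strictly increasing could have $\sigma'=0$ at some point, which would make $\eta'$ blow up there. The hypothesis $\sigma' > 0$ rules this out, and the openness of $\SC$ keeps every $x_i$ in the interior $(-1,1)$ where item 3 applies.
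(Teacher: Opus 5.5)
Your proposal is correct and follows the same route as the paper, which simply asserts that $\partial_x\phi(x)$ is diagonal with strictly positive diagonal entries and is therefore nonsingular. You additionally justify the positivity of those entries via $\eta_i'(x_i) = 1/\sigma_i'(\eta_i(x_i))$ with $\sigma_i' > 0$, a step the paper leaves implicit.
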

\begin{proof}
    The Jacobian $\partial_x \phi(x)$ is a diagonal matrix whose diagonal entries are strictly positive.  
    Since all diagonal elements are positive, $\partial_x \phi(x)$ is nonsingular.
\end{proof}

\subsubsection{Sigmoid Integral Function}
\label{sec:sig_integral}
Using the sigmoid functions presented above, we define the \textit{sigmoid integral function}.
The sigmoid integral function will be used to construct candidate Lyapunov functions in Section \ref{sec:controller}.
\begin{definition}
    \label{lineqr_quadratic_def}
    Let $\sigma:\mathbb{R}\to\mathbb{R}$ be a simple sigmoid function.
    The sigmoid integral function $\SV \colon \BBR \to [0, \infty)$ is defined as
    \begin{align}
        \SV(\zeta)
            \isdef
                \int_{0}^{{\zeta}} \sigma(s)   \rmd s.
        \label{eq:linear_quad_function_def}
    \end{align}
\end{definition}
The sigmoid integral function defined above has following easily verifiable properties. 
\begin{enumerate}
    \item $\SV(0)=0,$ and for all $\zeta \in \BBR \setminus \{ 0 \},$ $\SV(\zeta) > 0.$
    \item $\lim_{\zeta \to \pm \infty} \SV(\zeta) = \pm \infty.$
    \item $\partial_\zeta \SV\left(\zeta\right) = \sigma(\zeta)$, and thus the sigmoid integral function is continuous. 
\end{enumerate}
Note that the sigmoid integral functions $\SV(\zeta)$ exhibit quadratic behavior near $\zeta = 0$ and approach linear behavior for $|\zeta| \gg 0$.
Table \ref{tab:sat_functions} lists the sigmoid integral functions $\SV(\zeta)$ corresponding to each sigmoid function in the third column and Figure \ref{fig:saturation_functions} shows these sigmoid integral functions.

\section{Controller Synthesis}
\label{sec:controller}
This section develops a control law that guarantees the forward invariance of the safe set $\SSS$ for the system described by \eqref{eq:x1_dot}–\eqref{eq:x2_dot} while simultaneously guaranteeing asymptotic stability of the desired state $x_{1\rmd}$.

\subsection{State Transformation}
\label{sec:state_transformation}

The states subject to constraints are \textit{lifted} using the constraint-lifting function to obtain an unconstrained representation in the lifted state space.
The key idea of transforming the constrained dynamics into an equivalent unconstrained form is illustrated in Figure \ref{fig:constraint_lifting_transformation}.

\begin{figure}[ht]
    \centering

    \resizebox{0.9\columnwidth}{!}
    {
    \begin{tikzpicture}[scale=1,>=latex]

    \fill[pink!40,draw = black,thick] (-1,-1) rectangle (1,1);
    
    \draw[->,thick,red] (-1.5,0)--(1.5,0) node[anchor=west]{$x_1$};
    \draw[->,thick,red] (0,-1.5)--(0,1.5) node[anchor=south]{$x_2$};
    
    \draw[green!70!black, very thick, smooth]
      plot coordinates {
        (0.7,0.8)
        (-0.5,0.4)
        (-0.3,-0.8)
        (0.5,-0.5)
        (0.05,0.02)
        (0,0)
      };
    
    \filldraw[red] (0,0) circle (0.03);

    \fill[green!70!black] (0.7,0.8) circle (2pt);
    \fill[blue] (0,0) circle (2pt);

    \node[below, yshift=-30pt] at (0.8,0) {\textbf{safe set}};

    \draw[->, thick, black, bend left=40, >=Stealth] 
    (-0.5,0.4) .. controls (-1,2) .. (-1.5,1.5) 
    node[below, xshift=-15pt] {\parbox{2.2cm}{\centering\textbf{Constrained\\Dynamics}}};
    

    \begin{scope}[shift={(4.5,4.2)}]

        \draw[->,thick,red] (-2,0)--(2,0) node[anchor=west]{$z_1$};
        \draw[->,thick,red] (0,-2)--(0,2) node[anchor=south]{$z_2$};

        \draw[green!70!black, very thick, smooth]
          plot coordinates {
            (1.5,1.7)
            (-1,1)
            (-1.5,-1.5)
            (1,-1)
            (1,-0.5)
            (0,0)
          };

        \fill[green] (1.5,1.7) circle (2pt);
        \fill[blue] (0,0) circle (2pt);
        
        \draw[->, thick, black, bend left=10, >=Stealth] 
        (1,-0.5) .. controls (1.4,0) .. (1.5,-1.2) 
        node[below, xshift=0pt] {\parbox{2.2cm}{\centering\textbf{Unconstrained\\Dynamics}}};

        \draw[->,very thick, blue, bend right=10, >=Stealth] 
            (-3,-3.5) .. controls (-1.5,-3) .. (-0.5,-1.5) 
            node[midway, above, xshift=-30pt] {\parbox{2.2cm}{\centering\textbf{Constraint\\Lifting}}};
    \end{scope}


    \begin{scope}[shift={(0,4.2)}]

        \draw[-,ultra thick,black] (-1,-1.8)--(-1,1.8);
        \draw[-,ultra thick,black] (1,-1.8)--(1,1.8);
        \fill[pink!40] (-1,-1.8) rectangle (1,1.8);
        
        \draw[->,thick,red] (-1.5,0)--(1.5,0) node[anchor=west]{$x_1$};
        \draw[->,thick,red] (0,-2)--(0,2) node[anchor=south]{$z_2$};

        \draw[green!70!black, very thick, smooth]
          plot coordinates {
            (0.7,1.7)
            (-0.5,0.4)
            (-0.3,-1.7)
            (0.5,-0.5)
            (0,0)
          };
        
        \fill[green!70!black] (0.7,1.7) circle (2pt);
        \fill[blue] (0,0) circle (2pt);
    \end{scope}


    \begin{scope}[shift={(4.5,0)}]
        \draw[-,ultra thick,black] (-1.8,-1)--(1.8,-1);
        \draw[-,ultra thick,black] (-1.8,1)--(1.8,1);
        \fill[pink!40] (-1.8,-1) rectangle (1.8,1);

        \draw[->,thick,red] (-2,0)--(2,0) node[anchor=west]{$z_1$};
        \draw[->,thick,red] (0,-1.5)--(0,1.5) node[anchor=south]{$x_2$};

        \draw[green!70!black, very thick, smooth]
          plot coordinates {
            (1.7,0.8)
            (-1.7,0.4)
            (-0.3,-0.8)
            (0.7,-0.5)
            (0,0)
          };

        \fill[green!70!black] (1.7,0.8) circle (2pt);
        \fill[blue] (0,0) circle (2pt);
    \end{scope}
    
    \end{tikzpicture}
    }
    \caption{Illustration of the constraint-lifting concept, where the constrained dynamics in the original state space $(x_1,x_2)$ are transformed into equivalent unconstrained dynamics in the lifted state space $(z_1,z_2).$}
    \label{fig:constraint_lifting_transformation}
\end{figure}
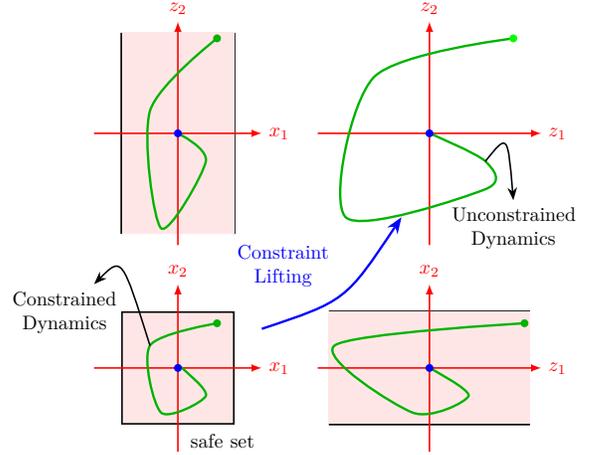

In particular, to facilitate controller synthesis, the states $(x_1, x_2)$ are first transformed to intermediate state variables $(\chi_1, \chi_2)$ such that the bounds on components of $x_1$ and $x_2$ are normalized to $\pm 1$.
This transformation ensures that each component of $\chi_1$ and $\chi_2$ lies within $(-1, +1)$ whenever $(x_1, x_2) \in \SSS$, that is, $\chi_1, \chi_2 \in \SC$.
Next, $(\chi_1, \chi_2)$ are transformed to $(z_1, z_2) \in \BBR^n$ through a constraint-lifting function, which are detailed in Section \ref{sec:mapping function}.
Due to the properties of the constraint-lifting function, while $\chi_1, \chi_2 \in \SC$, the transformed variables $z_1, z_2$ are unconstrained, that is, $z_1, z_2 \in \BBR^n$.
A stabilizing controller is then designed in the $z$-coordinates for the transformed $x_{1\rmd}$.
Finally, an additional transformation from $(z_1, z_2)$ to $(\zeta_1, \zeta_2)$ is introduced to simplify the notation used in mapping $z$ back to $x$ as well as the stability analysis.
The state transformations from $x$ to $z$ and the intermediate state variables $\chi$ and $\zeta$ are shown in Figure \ref{fig:state_transformation}.

\begin{figure}[ht]
\centering
    {%
    \begin{tikzpicture}[>={stealth'}, line width = 0.25mm]

    \node [input, name=ref]{};
    
    \node [smallblock, fill=green!20, rounded corners, right = 0.5cm of ref , 
           minimum height = 0.6cm, minimum width = 0.7cm] 
           (controller) {$D_\rmI(\overline x) x$};
    
    \node [smallblock, rounded corners, right = 2cm of controller, 
           minimum height = 0.6cm , minimum width = 0.7cm] 
           (integrator)            {$D(\overline x)\phi(\chi)$};
    
    \node [smallblock, fill=green!20, rounded corners, below = 1.cm of integrator, 
           minimum height = 0.6cm , minimum width = 0.7cm] 
           (system) {$D_\rmI(\overline x) z$};
           
    \node [smallblock, rounded corners, below = 1cm of controller, 
           minimum height = 0.6cm , minimum width = 0.7cm] 
           (integrator2)            {$D(\overline x)\psi(\zeta)$};

    \node [output, right = 1.0cm of system] (output) {};
    
    
    \node [input, left = 1.0cm of controller] (reference) {};
    
    \draw [->] (controller) -- node [above] {$\chi\in \SC$} (integrator);
    \draw [->] (integrator.0) -- +(1,0) node [above, xshift = -1em] {$z\in\BBR^n$} |-  (system.0);
    \draw [->] (system.180)  -- node [above] {$\zeta \in \BBR^n$} (integrator2.0);
    \draw [->] (integrator2.180) -- +(-1,0) |- node [above, xshift = 1em] {$x \in \SSS$} (controller.180);
    \end{tikzpicture}
    }  
    \caption{
        State transformations with the constraint-lifting function and its inverse used in the control synthesis.
    }
    \label{fig:state_transformation}
\end{figure}
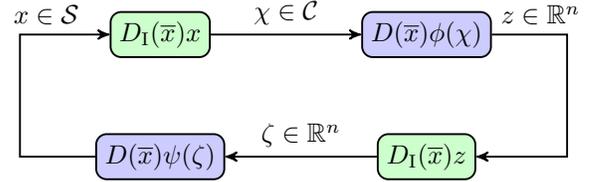

Define
\begin{align}
    \chi_1
        &\isdef
            D_{\rmI}(\overline{x}_1)
            x_1,
    \\
    \chi_2
        &\isdef
            D_{\rmI}(\overline{x}_2)
            x_2.
\end{align}
Note that the state variables $\chi_1$ and $\chi_2$ are defined to map the constraint boundaries to $\pm 1,$ that is, when the constraints are satisfied, then, $\chi_{1}, \chi_{2} \in \SC.$

Next, define
\begin{align}
    z_1 
        &\isdef
            D(\overline{x}_1)
            \phi(\chi_1),
    \label{eq:x1_to_z1}
    \\
    z_2 
        &\isdef
            D(\overline{x}_2)
            \phi(\chi_2),
    \label{eq:x2_to_z2}
\end{align}
where $\phi \colon \SC  \to \BBR^n $ is a constraint-lifting function as defined in Section \ref{sec:mapping function}.
\begin{remark}
    For simplicity, the same constraint-lifting function $\phi$ is used for both $z_1$ and $z_2$.
    This assumption is made without loss of generality, as the subsequent formulation can be easily extended to the case where distinct constraint-lifting functions are used for each state.
\end{remark}
Next, define
\begin{align}
    \zeta_{1}
        &\isdef
            D_{\rmI}(\overline{x}_1)
            z_1,
    \label{eq:zeta1_def}
    \\
    \zeta_{2}
        &\isdef
            D_{\rmI}(\overline{x}_2)
            z_2.
    \label{eq:zeta2_def}
\end{align}
The state variables $\zeta_1$ and $\zeta_2$ are defined to simplify the controller synthesis, as described below. 

Finally, define
\begin{align}
    x_1 
        &=
            D(\overline{x}_1)
            \psi(\zeta_1),
    \label{eq:z1_to_x1}
    \\
    x_2 
        &=
            D(\overline{x}_2)
            \psi(\zeta_2),
    \label{eq:z2_to_x2}
\end{align}
where $\psi:\mathbb{R}^n \to \SSS$ is an inverse of the constraint-lifting function as defined in Section \ref{sec:mapping function}.

With the state transformations defined above, the dynamics \eqref{eq:x1_dot}, \eqref{eq:x2_dot} in terms of the transformed state $z$ is
\begin{align}
    \dot{z}_1
        &=
            \SG_1(z_1,z_2), 
    \label{eq:z1_dot}
    \\
    \dot{z}_2
        &=
            \SF_2(z_1,z_2)  + \SG_2(z_1,z_2) u,
    \label{eq:z2_dot}
\end{align}
where 
\begin{align}
    \SG_1(z_1,z_2)
        &\isdef 
            \Phi(\zeta_1) 
            \psi(\zeta_2), 
    \label{eq:SG1def}
    \\
    \SF_2(z_1,z_2)
        &\isdef
            \left[ 
                \partial_{\chi_2} 
                \phi(\psi(\zeta_2))
            \right]
                f_2
                (
                    D(\overline{x}_1)
                    \psi(\zeta_1),
                    D(\overline{x}_2)
                    \psi(\zeta_2)
                ), 
    \label{eq:SF2def}
    \\
    \SG_2(z_1,z_2)
        &\isdef
            \left[ 
                \partial_{\chi_2} 
                \phi(\psi(\zeta_2))
            \right]
            g_2
                (
                    \jp{D(\overline{x}_1)}
                    \psi(\zeta_1),
                    D(\overline{x}_2)
                    \psi(\zeta_2)
                ),
    \label{eq:SG2def}
\end{align}
and
\begin{align}
    \Phi(\zeta_1) 
        \isdef
            \left[ \partial_{\chi_1} \phi(\psi(\zeta_1)) \right] 
            g_1(D(\overline{x}_1)\psi(\zeta_1))
            D(\overline{x}_2).
    \label{eq:Phi_def}.
\end{align}
Note that $\zeta_1$ and $\zeta_2$ are given by \eqref{eq:zeta1_def}, \eqref{eq:zeta2_def}.
Algebraic details of the derivation of dynamics in $z$ coordinates are shown in Appendix \ref{ap:derivation of z_dynamics}.

The following propositions establish key properties of the functions $\SG_1$, $\SF_2$, and $\SG_2$, which are subsequently used in the controller design and stability analysis.

\begin{proposition}
    \label{prop:equil_point_z1_dot}
    Consider $\SG_1(z_1,z_2) \in \BBR^{n}$ given by \eqref{eq:SG1def}.
    Then, $\SG_1(z_1,0) = 0.$
\end{proposition}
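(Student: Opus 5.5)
The claim follows by substituting $z_2=0$ into the definition \eqref{eq:SG1def} and using that each sigmoid vanishes at the origin. Since $\SG_1(z_1,z_2)=\Phi(\zeta_1)\psi(\zeta_2)$, it suffices to show two things: that $\psi(\zeta_2)=0$ when $z_2=0$, and that $\Phi(\zeta_1)$ is a well-defined finite matrix for every $z_1\in\BBR^n$.

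\textbf{Step 1: $\psi(\zeta_2)=0$.} By \eqref{eq:zeta2_def}, $\zeta_2=D_\rmI(\overline{x}_2)z_2$, so $z_2=0$ gives $\zeta_2=0$. By \eqref{eq:psi_general}, $\psi(0)=\matl \sigma_1(0) & \cdots & \sigma_n(0)\matr^\rmT$. Each $\sigma_i$ is a simple sigmoid and hence odd, so $\sigma_i(0)=-\sigma_i(0)$, which gives $\sigma_i(0)=0$. Therefore $\psi(\zeta_2)=0$.

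\textbf{Step 2: $\Phi(\zeta_1)$ is finite.} This is the only point needing care, namely that $\Phi(\zeta_1)$ is not singular. For any $z_1\in\BBR^n$, $\zeta_1=D_\rmI(\overline{x}_1)z_1\in\BBR^n$, and $\psi(\zeta_1)\in\SC$ because each $\sigma_i$ takes values in $(-1,1)$. The Jacobian $\partial_{\chi_1}\phi$ evaluated at $\psi(\zeta_1)$ is diagonal with entries $\eta_i'(\sigma_i(\zeta_{1,i}))=1/\sigma_i'(\zeta_{1,i})$. These entries are finite because $\sigma_i'>0$ everywhere. The factor $g_1(D(\overline{x}_1)\psi(\zeta_1))$ is finite because $g_1$ is continuous, and $D(\overline{x}_2)$ is a constant matrix. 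Hence $\Phi(\zeta_1)$ is a finite matrix for every $z_1$.

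\textbf{Conclusion.} Combining the two steps, $\SG_1(z_1,0)=\Phi(\zeta_1)\,0=0$ for all $z_1\in\BBR^n$.
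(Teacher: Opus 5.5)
Your proof is correct and follows the paper's argument, which simply notes that $\psi(0)=0$ and hence $\SG_1(z_1,0)=\Phi(D_\rmI(\overline{x}_1)z_1)\psi(0)=0$. Your Step 1 makes $\psi(0)=0$ explicit via oddness of the sigmoids. Your Step 2 checks that $\Phi(\zeta_1)$ is finite, a harmless detail the paper takes for granted; calling this ``bounded'' rather than ``not singular'' would avoid confusion with invertibility.
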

\begin{proof}
    Since $\psi(0)=0$ by definition, it follows from \eqref{eq:SG1def} that 
    $\SG_1(z_1,0) = \Phi(D_{\rmI}(\overline{x}_1) z_1) \psi(0)=0.$
\end{proof}

\begin{proposition}
    \label{prop:equil_point_z2_dot}
    Consider $\SF_2(z_1,z_2) \in \BBR^n$ given by \eqref{eq:SF2def}. 
    Then, $\SF_2(z_1,0) = 0.$
\end{proposition}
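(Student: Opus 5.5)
The argument mirrors the one for Proposition \ref{prop:equil_point_z1_dot}. We substitute $z_2 = 0$ into the definition \eqref{eq:SF2def} and trace the zero through the chain of transformations until it reaches the argument of $f_2$, where Assumption \ref{ass:function_zero} applies.

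First, by \eqref{eq:zeta2_def}, $z_2 = 0$ gives $\zeta_2 = D_{\rmI}(\overline{x}_2)\, 0 = 0$. Next, each component of $\psi$ is a simple sigmoid $\sigma_i$, and every simple sigmoid is odd, so $\sigma_i(0) = -\sigma_i(0)$ and hence $\sigma_i(0)=0$. Therefore $\psi(0) = 0$, and consequently
\begin{align}
    x_2 = D(\overline{x}_2)\psi(\zeta_2) = D(\overline{x}_2)\, 0 = 0 .
\end{align}
The first-state argument $x_1 = D(\overline{x}_1)\psi(\zeta_1)$ is some point with $|x_1| < \overline{x}_1$, because each $\sigma_i$ takes values in $(-1,1)$. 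Its particular value is irrelevant.

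With $x_2 = 0$, Assumption \ref{ass:function_zero} (the ``if'' direction) gives $f_2(D(\overline{x}_1)\psi(\zeta_1), 0) = 0$. It remains to check that the prefactor $\partial_{\chi_2}\phi(\psi(\zeta_2))$ is a well-defined finite matrix at $\zeta_2 = 0$. Here $\psi(0) = 0 \in \SC$, and by Proposition \ref{rm:partial_invert_sat} and the construction of $\phi$, the Jacobian is diagonal with finite entries $\eta_i'(0) = 1/\sigma_i'(0)$, where $\sigma_i'(0) > 0$. Multiplying this finite matrix by the zero vector yields $\SF_2(z_1,0) = 0$.

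The only point needing care is this well-definedness of the Jacobian factor, namely that it is evaluated at an interior point of $\SC$ and not on its boundary. Since $\psi(0)=0$ lies in the interior, this step is straightforward.
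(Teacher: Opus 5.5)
Your proof is correct and follows the same route as the paper: $z_2=0$ gives $\zeta_2=0$ and $\psi(0)=0$, so the second argument of $f_2$ vanishes, Assumption~\ref{ass:function_zero} gives $f_2=0$, and hence $\SF_2(z_1,0)=0$. Your two extra checks make explicit what the paper's one-line proof takes for granted: that $\psi(0)=0$ follows from oddness of the sigmoids, and that the Jacobian factor $\partial_{\chi_2}\phi$ is finite at $\chi_2=0$.
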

\begin{proof}
    Since $\psi(0)=0$ by definition, it follows from Assumption \ref{ass:function_zero} that $f_2
                (
                    D(\overline{x}_1)
                    \psi(\zeta_1),
                    0
                ) = 0,$
    which in turn implies $\SF_2(z_1,0) = 0.$
\end{proof}

\begin{proposition}
    \label{prop_SG2_invertible}
    Consider $\SG_2(z_1,z_2) \in \BBR^{n\times n}$ given by \eqref{eq:SG2def}.
    Then, $\SG_2(z_1,z_2)$ is nonsingular. 
\end{proposition}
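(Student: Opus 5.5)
The plan is to exploit the product structure in \eqref{eq:SG2def}. $\SG_2(z_1,z_2)$ is the product of the Jacobian factor $\partial_{\chi_2}\phi(\psi(\zeta_2))$ and the matrix $g_2(D(\overline{x}_1)\psi(\zeta_1), D(\overline{x}_2)\psi(\zeta_2))$. A product of two square matrices is nonsingular if and only if each factor is nonsingular, because $\det(AB)=\det(A)\det(B)$. So it suffices to prove that each factor is nonsingular at every $(z_1,z_2)\in\BBR^n\times\BBR^n$.

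\textbf{Jacobian factor.} Fix $(z_1,z_2)$ and set $\zeta_2 = D_\rmI(\overline{x}_2) z_2$ as in \eqref{eq:zeta2_def}. Each component $\sigma_i$ of $\psi$ in \eqref{eq:psi_general} is a simple sigmoid with range $(-1,1)$. Hence $\chi_2 \isdef \psi(\zeta_2)\in\SC$ for every $\zeta_2\in\BBR^n$. Proposition \ref{rm:partial_invert_sat} then applies directly: $\partial_{\chi_2}\phi(\chi_2)$ is nonsingular at $\chi_2=\psi(\zeta_2)$. Concretely, it is the diagonal matrix with positive entries $\eta_i'(\psi_i(\zeta_2)) = 1/\sigma_i'(\zeta_{2,i})>0$.

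\textbf{$g_2$ factor.} Here I want to invoke Assumption \ref{ass:function_invert}, so I need to show that the argument of $g_2$ lies in $\SSS$. Define $x_1 \isdef D(\overline{x}_1)\psi(\zeta_1)$ and $x_2 \isdef D(\overline{x}_2)\psi(\zeta_2)$, as in \eqref{eq:z1_to_x1}--\eqref{eq:z2_to_x2}. Since $|\sigma_i(\cdot)|<1$ and $\overline{x}_{1},\overline{x}_{2}\in\BBR^n_+$, each component satisfies $|x_{1,i}| = \overline{x}_{1,i}|\sigma_i(\zeta_{1,i})| < \overline{x}_{1,i}$, and likewise $|x_{2,i}|<\overline{x}_{2,i}$. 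Therefore $(x_1,x_2)\in\SSS$, and Assumption \ref{ass:function_invert} gives that $g_2(x_1,x_2)$ is nonsingular.

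Combining the two parts, $\det \SG_2(z_1,z_2)$ is the product of two nonzero determinants, so $\SG_2(z_1,z_2)$ is nonsingular for all $(z_1,z_2)\in\BBR^n\times\BBR^n$. The only step needing care is the $g_2$ factor: Assumption \ref{ass:function_invert} is stated only on $\SSS$, so the argument depends on the strict bound $|\sigma_i|<1$ to place the image of $\psi$ inside the open set $\SSS$. Everything else is immediate.
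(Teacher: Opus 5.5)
Your proof is correct and follows essentially the same route as the paper. You show the diagonal Jacobian factor is nonsingular via Proposition~\ref{rm:partial_invert_sat}, show the $g_2$ factor is nonsingular via Assumption~\ref{ass:function_invert}, and conclude because a product of nonsingular matrices is nonsingular; the one addition is your explicit check that $|\sigma_i|<1$ places the argument of $g_2$ inside $\SSS$, which the paper uses only implicitly.
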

\begin{proof}
    It follows from Proposition \ref{rm:partial_invert_sat} that $\partial_{\chi_2}\phi(\psi(\zeta_2)) = \partial_{\chi_2}\phi(\chi_2)$ is a diagonal matrix with strictly positive elements on the diagonal.
    Next, it follows from Assumption \ref{ass:function_invert} that $g_2
                (
                    \jp{D(\overline{x}_1)}
                    \psi(\zeta_1),
                    D(\overline{x}_2)
                    \psi(\zeta_2)
                ) $ is nonsingular.
    Since the product of two nonsingular matrices is a nonsingular matrix, it follows that $\SG_2(z_1,z_2)$ is nonsingular.
\end{proof}

\begin{proposition}
    \label{prop:PHI_invertible}
    Consider $\Phi(\zeta_1) \in \mathbb{R}^{n \times n}$ given by \eqref{eq:Phi_def}. 
    Then, $\Phi(\zeta_1)$ is nonsingular.
\end{proposition}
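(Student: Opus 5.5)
The plan mirrors the proof of Proposition \ref{prop_SG2_invertible}. By \eqref{eq:Phi_def}, $\Phi(\zeta_1)$ is the product of three $n\times n$ matrices:
\begin{align*}
\partial_{\chi_1}\phi(\psi(\zeta_1)), \quad g_1(D(\overline{x}_1)\psi(\zeta_1)), \quad D(\overline{x}_2).
\end{align*}
Since a product of nonsingular matrices is nonsingular, it suffices to show that each factor is nonsingular, equivalently that
\begin{align*}
\det\Phi(\zeta_1) = \det\big[\partial_{\chi_1}\phi(\psi(\zeta_1))\big]\,\det\big[g_1(D(\overline{x}_1)\psi(\zeta_1))\big]\,\det D(\overline{x}_2) \neq 0.
\end{align*}

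\emph{First factor.} For every $\zeta_1\in\BBR^n$, the point $\chi_1=\psi(\zeta_1)$ lies in $\SC$, because each $\sigma_i$ takes values in $(-1,1)$. Proposition \ref{rm:partial_invert_sat} then applies directly: $\partial_{\chi_1}\phi(\chi_1)$ is diagonal with entries
\begin{align*}
\eta_i'(\chi_{1,i}) = 1/\sigma_i'(\eta_i(\chi_{1,i})) > 0,
\end{align*}
so it is nonsingular.

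\emph{Third factor.} $D(\overline{x}_2)$ is diagonal with entries equal to the components of $\overline{x}_2\in\BBR^n_+$. These are all strictly positive, so $\det D(\overline{x}_2)=\prod_i \overline{x}_{2,i}>0$.

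\emph{Second factor.} This is the only step needing care. I must check that the argument $x_1=D(\overline{x}_1)\psi(\zeta_1)$ satisfies $|x_{1,i}|<\overline{x}_{1,i}$ componentwise, which holds because $|\sigma_i(\zeta_{1,i})|<1$. Assumption \ref{ass:function_invert} is stated for $(x_1,x_2)\in\SSS$, yet $g_1$ depends only on $x_1$. So I will pair this $x_1$ with any admissible $x_2$, for instance $x_2=0$ or $x_2=D(\overline{x}_2)\psi(\zeta_2)$, to obtain a point of $\SSS$. The assumption then gives that $g_1(x_1)$ is nonsingular. Combining the three nonsingular factors gives the claim for all $\zeta_1\in\BBR^n$.
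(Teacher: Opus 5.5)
Your proposal is correct and follows the same route as the paper. Like the paper, you factor $\Phi(\zeta_1)$ into the positive diagonal Jacobian $\partial_{\chi_1}\phi(\psi(\zeta_1))$, the matrix $g_1(D(\overline{x}_1)\psi(\zeta_1))$ (nonsingular by Assumption~\ref{ass:function_invert}), and the positive diagonal $D(\overline{x}_2)$, and your explicit check that $D(\overline{x}_1)\psi(\zeta_1)$ meets the strict bounds (paired with an admissible $x_2$ such as $x_2=0$) supplies the domain verification the paper leaves implicit when it invokes that assumption.
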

\begin{proof}
    It follows from Proposition \ref{rm:partial_invert_sat} that $\partial_{\chi_1}\phi(\psi(\zeta_1)) = \partial_{\chi_1}\phi(\chi_1)$ is a diagonal matrix with strictly positive elements on the diagonal. 
    Next, it follows from Assumption \ref{ass:function_invert} that
    $g_1
    (
        \jp{D(\overline{x}_1)}
        \psi(\zeta_1),
    ) $
    is nonsingular.
    Furthermore $D(\overline{x}_2)$ is positive definite.
    Since the product of nonsingular matrices is a nonsingular matrix, it follows that $\Phi(\zeta_1)$ is nonsingular.
\end{proof}

\subsection{Controller Synthesis}
\label{sec:ControlSynthesis}
Let $x_{1\rmd} \in \SSS$ be the desired value of $x_1.$
Let $\chi_{1\rmd} \in \SC$ and $z_{1\rmd}$ be the corresponding desired state values, that is, 
\begin{align}
    \chi_{1\rmd} 
        &\isdef 
        D_{\rmI}(\overline{x}_1)
        x_{1\rmd},
        \\
    z_{1\rmd} 
        &\isdef 
             D(\overline{x}_1)
            \phi(\chi_{1\rmd}).
    \label{eq:z1d}
\end{align}
Define the \jp{tracking error} 
\begin{align}
    e_1 
        &\isdef 
            z_1 - z_{1\rmd}.
    \label{eq:e1_def}
\end{align}
\subsubsection{\texorpdfstring{$e_1$}{e1} stabilization}
Consider the function
\begin{align}
    V_1
        &\isdef
            \dfrac{1}{2}
            e_1^{\rmT}e_1.
    \label{eq:V1def}
\end{align}
Differentiating \eqref{eq:V1def} and using \eqref{eq:z1_dot} yields
\begin{align}
    \dot{V}_1
        &=
            e_1^{\rmT}
            \SG_1(z_1,z_2).
    \label{eq:V1dot}
\end{align}

If $z_2$ is chosen such that $\SG_1(z_1,z_2) = - k e_1,$ where $k_1 > 0,$ then $\dot V_1 < 0.$
However, since $z_2$ is not the control input, $\SG_1(z_1,z_2)$ cannot be arbitrarily chosen. 
Instead, we define
\begin{align}
    e_2
        &\isdef
            \SG_1(z_1,z_2)
            -
            (- k_1e_1)
            =
                \Phi(\zeta_1) 
                \psi(\zeta_2)
                +
                k_1e_1.
    \label{eq:e2_def}
\end{align}
and design the control law to ensure that $e_2 $ converges to $ 0,$ thus implying that $\SG_1(z_1,z_2)$ converges to $ - k_1e_1.$ 
Substituting $\SG_1(z_1,z_2)$ from \eqref{eq:e2_def} into \eqref{eq:V1dot} yields
\begin{align}
    \dot{V}_1
        &=
            - k_1 e_1^{\rmT}e_1
            + e_2^{\rmT}e_1.
\end{align}

\subsubsection{\texorpdfstring{$e_2$}{e2} stabilization}

Next, consider the function
\begin{align}
    V
        &\isdef
            V_1
            +
            \sum_{i=1}^n
            \SV(\zeta_{2i}),
    \label{eq:LyapCandidate}
\end{align}
where $\SV$ is a sigmoid integral function given by \eqref{eq:linear_quad_function_def}.
As shown in Appendix \ref{ap:appendix_sigmoid integral definition}, note that
\begin{align}
    \rmd_t
    \sum_{i=1}^n
            \SV(\zeta_{2i})
        &= 
            \psi(\zeta_2)^{\rmT}
            D_{\rmI}(\overline{x}_2)
            \dot{z}_2. 
\end{align}
Thus,
\begin{align}
    \dot{V}
        &=
            - k_1e_1^{\rmT}e_1
            + e_2^{\rmT}e_1
            + 
            \psi(\zeta_2)^{\rmT}
            D_\rmI(\overline{x}_2)
        \nn \\
        & \quad
            \Big(
                    \SF_2(z_1,z_2) + \SG_2(z_1,z_2)u
            \Big).
\end{align} 
Substituting $\psi(\zeta_2)$ from \eqref{eq:e2_def} in the equation above yields
\begin{align}
    \dot{V}
        &=
            - k_1e_1^2
            + e_2^{\rmT}e_1
            + 
            (e_2^{\rmT} - k_1e_1^{\rmT})
            \Phi^{-\rmT}(\zeta_1)
        \nn \\
        & \quad
            D_\rmI(\overline{x}_2)
            \Big(
                    \SF_2(z_1,z_2) + \SG_2(z_1,z_2)u
            \Big).
\end{align}
Finally, choosing 
\begin{align}
    u
        &=
            - \SG_2(z_1,z_2)^{-1}
            \left(
                \SF_2(z_1,z_2)
                +
                k_2
                D(\overline{x}_2)
                \Phi^{\rmT}(\zeta_1)
                e_2
            \right)
    \label{eq:u}
\end{align}
yields
\begin{align}
    \dot{V}
        &=
            - k_1e_1^{\rmT}e_1
            + (1 + k_1k_2)e_2^{\rmT}e_1
            - k_2e_2^{\rmT}e_2 
        \nn \\
        &=
            - k_1e_1^{\rmT}e_1
            + (1 + k_1k_2)e_2^{\rmT}e_1
            - k_2e_2^{\rmT}e_2
        \nn \\
        & \quad
            + 2\sqrt{k_1k_2}e_1^{\rmT}e_2
            - 2\sqrt{k_1k_2}e_1^{\rmT}e_2
        \nn \\
        &=
            - \left(\sqrt{k_1}e_1 - \sqrt{k_2}e_2\right)^{\rmT}
            \left(\sqrt{k_1}e_1 - \sqrt{k_2}e_2\right)
        \nn \\
        & \quad
            + (1 + k_1k_2 - 2\sqrt{k_1k_2})e_2^{\rmT}e_1.
            \label{eq:V_dot}
\end{align}

Note that 
    If $k_2 = k_1^{-1},$ then,
    \begin{align}
        \dot{V}
                &=
                - \left(\sqrt{k_1}e_1 - \sqrt{k_2}e_2\right)^{\rmT}
                \left(\sqrt{k_1}e_1 - \sqrt{k_2}e_2\right).
                \label{eq:V_dot_k2condition}
    \end{align}
    Furthermore, if $k_1 e_1 = e_2,$ then $\dot V = 0.$

\subsection{Stability Analysis}
\label{sec:stability_analysis}
The stability analysis of the closed-loop dynamics with the proposed control law uses the Barbashin-Krasovskii-LaSalle invariance principle, which originally appeared as Theorems 1 and 2 in \cite{lasalle1960some}.
In particular, the Barbashin-Krasovskii-LaSalles's invariance principle is used to prove the asymptotic stability of the equilibrium point.

Propositions \ref{prop:equilibriumPoint}, \ref{prop:Omega_def}, \ref{prop:E_def}, and \ref{prop:M_def} introduced below define the sets used in the stability analysis and establish their properties.

\begin{proposition}
    \label{prop:equilibriumPoint}
    [\textbf{Equilibrium solution.}]
    Consider the system \eqref{eq:z1_dot}, \eqref{eq:z2_dot}.
    Let $z_{1\rmd} \in \BBR^n.$
    Then, the solution
    \begin{align}
        (z_1,z_2,u) 
            =
            \left(
                z_{1\rmd},
                0,
                0
            \right)
        \label{eq:equilibrium_point}
    \end{align}
    is an equilibrium point.
\end{proposition}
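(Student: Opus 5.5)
The plan is to substitute $(z_1,z_2,u)=(z_{1\rmd},0,0)$ into the right-hand sides of \eqref{eq:z1_dot} and \eqref{eq:z2_dot} and check that both vanish. Then $\dot z_1=\dot z_2=0$ along the constant trajectory, so it is an equilibrium.

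For the first equation, Proposition \ref{prop:equil_point_z1_dot} gives $\SG_1(z_1,0)=0$ for every $z_1$, and in particular at $z_1=z_{1\rmd}$. Hence $\dot z_1=0$.

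For the second equation, Proposition \ref{prop:equil_point_z2_dot} gives $\SF_2(z_{1\rmd},0)=0$. Since $u=0$, the term $\SG_2(z_{1\rmd},0)u$ is zero, provided $\SG_2$ is finite there. Well-definedness holds because $\zeta_2=0$ gives $\psi(0)=0$, which lies in $\SC$. Likewise $z_{1\rmd}$ corresponds to $\chi_{1\rmd}\in\SC$, so $D(\overline x_1)\psi(\zeta_1)$ is an interior point of the safe set. At such points the Jacobian $\partial_{\chi_2}\phi$ and $g_2$ are finite. Hence $\dot z_2=0$.

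I would add a consistency check that $u=0$ is exactly what the control law \eqref{eq:u} produces at this point. There $\zeta_2=0$, so $\psi(\zeta_2)=0$, and $e_1=0$. By \eqref{eq:e2_def} this gives $e_2=\Phi(\zeta_1)\cdot 0+k_1\cdot 0=0$. Together with $\SF_2(z_{1\rmd},0)=0$ and the invertibility of $\SG_2$ (Proposition \ref{prop_SG2_invertible}), \eqref{eq:u} then yields $u=0$, so the triple is an equilibrium of the closed-loop system as well.

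There is no substantive obstacle here. The only point needing care is that all quantities are evaluated inside $\SC$, so the Jacobians of $\phi$ are finite. This holds because $z_{1\rmd}$ is finite and $z_2=0$.
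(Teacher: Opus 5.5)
Your proposal is correct and matches the paper's proof. Both substitute $(z_{1\rmd},0,0)$, invoke Proposition~\ref{prop:equil_point_z1_dot} to get $\dot z_1=0$, and invoke Proposition~\ref{prop:equil_point_z2_dot} together with $u=0$ to get $\dot z_2=0$. Your well-definedness remark and your check that \eqref{eq:u} returns $u=0$ there (since $e_1=e_2=0$ and $\SF_2=0$) are sound additions that the paper leaves implicit.
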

\begin{proof}
    If $z_2=0,$ then it follows from Proposition \ref{prop:equil_point_z1_dot} that $\SG_1(z_1,z_2)=0,$ which implies that $\dot z_1 = 0.$
    Furthermore, it follows from Proposition \ref{prop:equil_point_z2_dot} that $\SF_2(z_1,z_2)=0.$
    Moreover, if $u = 0,$ then $\dot z_2 = 0,$ implying that $(z_1, z_2, u) \equiv (z_{1\rmd},0,0)$ is an equilibrium point.
\end{proof}
\begin{proposition}
     \label{prop:Omega_def}
    Consider the system \eqref{eq:z1_dot}, \eqref{eq:z2_dot}.
    Consider the Lyapunov candidate function \eqref{eq:LyapCandidate}.
    Let $k_2 = k_1\inv.$
    Let $\ell > 0.$ 
    Define the sublevel set
    \begin{align}
        \Omega
            \isdef
                \{
                    (z_1,z_2,u) \in
                    \mathbb{R}^n
                    \times 
                    \mathbb{R}^n
                    \times 
                    \mathbb{R}^n
                    : V
                    \leq \ell
                \}.
        \label{eq:Omega_def}
    \end{align}
    Then, $\Omega$ is bounded and a positively invariant set.
\end{proposition}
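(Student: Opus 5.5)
Positive invariance comes first and follows directly from \eqref{eq:V_dot_k2condition}. With $k_2 = k_1\inv$, the closed-loop system \eqref{eq:z1_dot}, \eqref{eq:z2_dot} under the control law \eqref{eq:u} satisfies
\begin{align*}
    \dot V = -\big(\sqrt{k_1}e_1-\sqrt{k_2}e_2\big)^{\rmT}\big(\sqrt{k_1}e_1-\sqrt{k_2}e_2\big) \le 0 ,
\end{align*}
wherever the solution exists. Consequently $t\mapsto V(t)$ is nonincreasing, and any trajectory starting in $\Omega$ satisfies $V(t)\le V(0)\le \ell$ for all $t$ in its interval of existence.

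To make this a genuine forward invariance statement, I would show that solutions starting in $\Omega$ exist for all $t\ge 0$. The right-hand side of the closed loop is locally Lipschitz in $(z_1,z_2)$: the functions $\phi$, $\psi$, $g_1$, $f_2$, $g_2$ are smooth, and $\SG_2\inv$ and $\Phi$ are well defined by Propositions \ref{prop_SG2_invertible} and \ref{prop:PHI_invertible}. Hence local solutions exist and are unique. Once boundedness of $\Omega$ is established, the trajectory stays in the compact set $\Omega$, so it cannot escape in finite time and exists on $[0,\infty)$. I will treat the $u$-coordinate in \eqref{eq:Omega_def} as determined by the feedback \eqref{eq:u}, a continuous function of $(z_1,z_2)$. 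Boundedness of $(z_1,z_2)$ therefore gives boundedness of $u$ on the closure, and it suffices to bound $(z_1,z_2)$.

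Boundedness of $\Omega$ rests on the fact that both terms of $V$ are nonnegative, so each is bounded by $\ell$. From $V_1=\tfrac12 e_1^{\rmT}e_1\le \ell$ we get $\|z_1-z_{1\rmd}\|\le\sqrt{2\ell}$, which bounds $z_1$. For $z_2$, each $\SV(\zeta_{2i})\le \ell$. By the properties of the sigmoid integral function ($\SV(0)=0$, $\SV$ even and strictly increasing on $[0,\infty)$ since $\SV'=\sigma>0$ there, and $\SV(\zeta)\to\infty$ as $|\zeta|\to\infty$), the sublevel set $\{\zeta:\SV(\zeta)\le \ell\}$ is a compact interval $[-c_\ell,c_\ell]$. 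Hence $|\zeta_{2i}|\le c_\ell$, and $z_2=D(\overline{x}_2)\zeta_2$ is bounded. Closedness of $\Omega$ follows from continuity of $V$. Therefore $\Omega$ is compact.

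The main obstacle is the radial unboundedness of $\SV$, which the paper's property list writes loosely as ``$\to\pm\infty$''. I would verify it directly from the sigmoid definition. Since $\sigma$ is increasing and odd, for $\zeta\ge 1$ we have $\SV(\zeta)\ge\int_1^\zeta\sigma(s)\,\rmd s\ge \sigma(1)(\zeta-1)\to\infty$, and evenness handles $\zeta\to-\infty$. This linear growth is exactly what makes the sublevel sets compact.

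The bounds also give the safety conclusion. On $\Omega$ the variables $z_1,z_2$ stay finite, so $\chi_1=\psi(\zeta_1)$ and $\chi_2=\psi(\zeta_2)$ remain in $\SC$, and the original states remain in $\SSS$.
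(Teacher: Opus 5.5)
Your proof is correct and follows the same route as the paper: radial unboundedness of $V$ makes the sublevel set bounded, and $\dot V \le 0$ from \eqref{eq:V_dot_k2condition} makes it positively invariant. You also fill in details the paper's proof leaves implicit: the explicit bound $\SV(\zeta)\ge\sigma(1)(\zeta-1)$ for $\zeta\ge 1$, the no-finite-escape argument, and reading the $u$-coordinate of $\Omega$ as the feedback \eqref{eq:u}, without which the set as literally written is unbounded in $u$.
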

\begin{proof}
    Since $V$ is radially unbounded and $\Omega$ is defined by $V \leq \ell,$ it follows from Theorem $2$ in \cite{lasalle1960some} that $\Omega$ is bounded. 
    Since $\dot{V} \leq 0$ and $V > 0,$ $V$ is non-increasing along the trajectory of the system. 
    Therefore, any trajectory starting in $\Omega$ cannot leave $\Omega,$ thus implying that $\Omega$ is a positively invariant set.
\end{proof}

\begin{proposition}
    \label{prop:E_def}
    Consider the system \eqref{eq:z1_dot}, \eqref{eq:z2_dot}.
    Consider the function \eqref{eq:LyapCandidate} and the positively invariant set $\Omega$, given in \eqref{eq:Omega_def}.
    Let $k_2 = k_1\inv.$
    Define $E \subset \Omega$ such that
    $z_2 = 0.$
    Then, $E$ is the set of all the points in $\Omega$ such that $\dot{V} = 0.$
\end{proposition}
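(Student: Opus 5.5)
The plan is to characterize the zero set of $\dot V$ directly from the closed-form expression \eqref{eq:V_dot_k2condition}, and then translate the resulting condition on $(e_1,e_2)$ back into a condition on $z_2$. This uses the definition \eqref{eq:e2_def} of $e_2$ together with the invertibility properties established earlier. I expect the argument to be a chain of equivalences, so that both inclusions $E \subseteq \{\dot V = 0\}$ and $\{\dot V=0\}\cap\Omega \subseteq E$ come out at once.

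\textbf{Step 1: zero set of $\dot V$ in terms of $e_1,e_2$.} With $k_2 = k_1\inv$ and the control law \eqref{eq:u}, equation \eqref{eq:V_dot_k2condition} gives $\dot V = -\|\sqrt{k_1}e_1 - \sqrt{k_2}e_2\|^2$. Hence $\dot V = 0$ if and only if $\sqrt{k_1}e_1 = \sqrt{k_2}e_2$. Multiplying by $\sqrt{k_1}$ and using $\sqrt{k_1k_2}=1$, this is equivalent to $e_2 = k_1 e_1$.

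\textbf{Step 2: reduce to $\psi(\zeta_2)=0$.} By \eqref{eq:e2_def}, $e_2 = \Phi(\zeta_1)\psi(\zeta_2) + k_1 e_1$. Therefore $e_2 = k_1e_1$ holds if and only if $\Phi(\zeta_1)\psi(\zeta_2) = 0$. By Proposition \ref{prop:PHI_invertible}, $\Phi(\zeta_1)$ is nonsingular, so this is equivalent to $\psi(\zeta_2)=0$.

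\textbf{Step 3: reduce to $z_2=0$.} Each component of $\psi$ is a strictly increasing odd sigmoid $\sigma_i$, so $\sigma_i(s)=0$ if and only if $s=0$. Hence $\psi(\zeta_2)=0$ if and only if $\zeta_2=0$. Since $\zeta_2 = D_\rmI(\overline x_2)z_2$ with $D_\rmI(\overline x_2)$ nonsingular (because $\overline x_2\in\BBR^n_+$), this is in turn equivalent to $z_2 = 0$. Intersecting the chain of equivalences with $\Omega$ shows that $\{(z_1,z_2,u)\in\Omega : \dot V = 0\}$ is exactly the set of points of $\Omega$ with $z_2=0$, namely $E$.

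The main point needing care is Step 2. One must make sure that $\Phi(\zeta_1)$ is nonsingular at every point of $\Omega$, not only near the equilibrium. Proposition \ref{prop:PHI_invertible} relies on Assumption \ref{ass:function_invert}, which is stated on $\SSS$. I will note that every $z_1\in\BBR^n$ maps through $\psi$ to some $x_1$ with $|x_1|<\overline x_1$, so the argument of $g_1$ always lies in the admissible region and Proposition \ref{prop:PHI_invertible} applies throughout $\Omega$.

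A second minor point is the remark after \eqref{eq:V_dot_k2condition}, which states the condition as $k_1e_1=e_2$. I will verify explicitly that this is the same condition as the one derived in Step 1, so that no factor of $k_1$ is lost.
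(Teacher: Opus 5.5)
Your proof is correct and follows the same chain as the paper's proof. Both use \eqref{eq:V_dot} with $k_2=k_1\inv$ to get $\dot V=0 \iff e_2=k_1e_1$, then \eqref{eq:e2_def} to reduce to $\Phi(\zeta_1)\psi(\zeta_2)=0$, and finally $\zeta_2=0\iff z_2=0$. The paper's proof argues only the direction $z_2=0\Rightarrow\dot V=0$ explicitly. Your use of Proposition~\ref{prop:PHI_invertible} (valid on all of $\Omega$, as you correctly note) together with the injectivity of $\psi$ at the origin supplies the converse inclusion needed for the claimed set equality.
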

\begin{proof}
    Note that it follows from \eqref{eq:zeta2_def} that $\zeta_2 = 0$ if and only if $z_2 = 0.$
    Next, $\zeta_2 = 0$ implies that $\psi(\zeta_2) = 0,$ which further implies that $\Phi(\zeta_1)  \psi(\zeta_2) = 0.$
    Next, it follows from \eqref{eq:e2_def} that $k_1e_1 = e_2.$
    Finally, since $k_2 = k_1\inv,$ it follows from \eqref{eq:V_dot} that $\dot{V} = 0$ if and only if $k_1e_1 = e_2.$
\end{proof}

\begin{proposition}
    \label{prop:M_def}
    Consider the system \eqref{eq:z1_dot}, \eqref{eq:z2_dot}.
    Consider the function \eqref{eq:LyapCandidate}.
    Let $k_2 = k_1\inv.$
    Let $M \subset E$ such that
    $u = 0.$ 
    Then, $M$ is the largest positively invariant set in $E.$
\end{proposition}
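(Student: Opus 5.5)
To prove Proposition~\ref{prop:M_def}, I will characterize the trajectories of the closed-loop system \eqref{eq:z1_dot}, \eqref{eq:z2_dot}, \eqref{eq:u} that remain in $E$ for all time. The aim is to show that any such trajectory has $u\equiv 0$, and in fact sits at the equilibrium of Proposition~\ref{prop:equilibriumPoint}. Conversely, the points of $E$ with $u=0$ form an invariant set. The set $M$ is understood as the points of $E$ at which the closed-loop input vanishes.

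\textbf{Step 1 (trajectories in $E$ have constant $z_1$).} Suppose a solution stays in $E$ on some interval. Then $z_2(t)\equiv 0$ there, so $\dot z_2\equiv 0$. By Proposition~\ref{prop:equil_point_z1_dot}, $\dot z_1=\SG_1(z_1,0)=0$, so $z_1$ is constant on that interval.

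\textbf{Step 2 (the input vanishes on such trajectories).} With $z_2=0$, Proposition~\ref{prop:equil_point_z2_dot} gives $\SF_2(z_1,0)=0$. Hence $0=\dot z_2=\SG_2(z_1,0)u$, and Proposition~\ref{prop_SG2_invertible} forces $u=0$.

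\textbf{Step 3 (pinning down $e_1$).} Substitute $u=0$, $\SF_2=0$ into \eqref{eq:u}. This gives $k_2 D(\overline{x}_2)\Phi^{\rmT}(\zeta_1)e_2=0$. Since $k_2>0$, $D(\overline{x}_2)$ is positive definite, and $\Phi$ is nonsingular by Proposition~\ref{prop:PHI_invertible}, it follows that $e_2=0$. But on $E$ we also have $e_2=\Phi(\zeta_1)\psi(0)+k_1e_1=k_1e_1$, so $e_1=0$, that is, $z_1=z_{1\rmd}$. Therefore every trajectory contained in $E$ lies in $\{u=0\}$, which is $M$; in fact it sits at $(z_{1\rmd},0,0)$.

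\textbf{Step 4 (invariance and maximality).} By Proposition~\ref{prop:equilibriumPoint}, the point $(z_{1\rmd},0,0)$ is an equilibrium, so the set from Step~3 is invariant. Any positively invariant subset of $E$ consists of trajectories staying in $E$, so by Steps~1--3 it is contained in $M$. This shows that $M$ is the largest positively invariant set in $E$.

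\textbf{Main obstacle.} The delicate point is the bookkeeping of $u$ as a coordinate in $\Omega$. Here $u$ is not an independent state; it is determined by feedback through \eqref{eq:u}. I will therefore treat $M$ as the image of the closed-loop points with $u=0$ and make explicit that, on $E$, $u=0$ is equivalent to $e_1=0$. This identifies $M$ with the single equilibrium of Proposition~\ref{prop:equilibriumPoint}.
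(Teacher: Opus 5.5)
Your proposal is correct, and its core matches the paper's argument. The paper notes that $(z_{1\rmd},0,0)$ is an equilibrium, and that if $z_2=0$ and $u\neq0$ then $\dot z_2=\SG_2(z_1,0)u\neq0$, tacitly using $\SF_2(z_1,0)=0$, so the trajectory leaves $E$. That is your Step~2 in contrapositive form together with your Step~4.

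Where you genuinely differ is Step~3. The paper treats $u$ as an independent coordinate of $\Omega$ and stops at $u=0$. Read that way, $M=\{z_2=0,\ u=0\}$ leaves $z_1$ unconstrained: every $(z_1,0,0)$ is an open-loop equilibrium by Propositions~\ref{prop:equil_point_z1_dot} and~\ref{prop:equil_point_z2_dot}. So the proposition as proved there does not by itself justify $z_1\to z_{1\rmd}$ in Theorem~\ref{thrm:stability}.

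You instead evaluate the feedback \eqref{eq:u} on $E$, where $e_2=k_1e_1$ and $k_1k_2=1$. This gives $u=-\SG_2(z_1,0)^{-1}D(\overline{x}_2)\Phi^{\rmT}(\zeta_1)e_1$, so $u=0$ on $E$ if and only if $e_1=0$. Hence $M$ collapses to the single closed-loop equilibrium $(z_{1\rmd},0)$. This costs one line, using Proposition~\ref{prop:PHI_invertible} and positivity of $D(\overline{x}_2)$, and it is exactly what makes the subsequent LaSalle step deliver $z_1\to z_{1\rmd}$.

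Your Step~1 (constancy of $z_1$) is harmless but not needed.
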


\begin{proof}
    It follows from Proposition \ref{prop:equilibriumPoint} that the point $ z_1 = z_{1\rmd}, \, z_2 = u = 0 $ is an equilibrium of the system. 
    Hence, the set of all equilibrium points constitutes a positively invariant subset of $ E $. 
    Next, consider the case where $ u \neq 0 $ and $ z_2 = 0 $. 
    From \eqref{eq:z2_dot} and the fact that $ \SG_2(z_1, z_2) $ is nonsingular for all 
    $ z_1, z_2 \in \mathbb{R}^n $, it follows that $ \dot{z}_2 \neq 0 $. 
    Consequently, the trajectory leaves the set $ E $. 
    Therefore, $ M \subset E $ is the largest positively invariant subset of $ E $.
\end{proof}

\begin{theorem}
    \label{thrm:stability}
    Consider the system \eqref{eq:x1_dot}, \eqref{eq:x2_dot} and the system \eqref{eq:z1_dot}, \eqref{eq:z2_dot}.
    Assume that the initial conditions of \eqref{eq:x1_dot}, \eqref{eq:x2_dot} are inside the safe set, that is, 
    $(x_1(0),x_2(0)) \in \SSS,$ and let $|x_{1\rmd}|<\overline{x}_1.$
    Consider the control law \eqref{eq:u}, with $k_2 = k_1^{-1}.$
    Then,
    \begin{align}
        \lim_{t\to\infty}z_1(t) 
            &= 
                z_{1\rmd},
        \label{eq:z1limit}
        \\
        \lim_{t\to\infty}z_2(t) 
            &= 
                0,
        \\
        \lim_{t\to\infty}u(t) 
            &= 
                0,
        \label{eq:ulimit}
        \\
        \lim_{t\to\infty}x_1(t) 
            &= 
                x_{1\rmd},
        \label{eq:x1limit}
        \\
        \forall t\geq0,
        (x_1(t), x_2(t)) 
            &\in
                \SSS.
        \label{eq:safeset_FI}
    \end{align}
\end{theorem}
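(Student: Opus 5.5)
The argument has three stages. First, transfer the initial condition to the lifted coordinates. Second, apply the Barbashin--Krasovskii--LaSalle invariance principle to the closed loop \eqref{eq:z1_dot}, \eqref{eq:z2_dot} under \eqref{eq:u}, using Propositions \ref{prop:equilibriumPoint}--\ref{prop:M_def}. Third, map the conclusions back through $\psi$ to obtain \eqref{eq:x1limit} and \eqref{eq:safeset_FI}.

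\emph{Initialization.} Since $(x_1(0),x_2(0))\in\SSS$, we have $\chi_1(0),\chi_2(0)\in\SC$, so $z_1(0)=D(\overline x_1)\phi(\chi_1(0))$ and $z_2(0)$ are finite. Likewise, $|x_{1\rmd}|<\overline x_1$ gives a finite $z_{1\rmd}$ by \eqref{eq:z1d}. Hence $V(0)$ in \eqref{eq:LyapCandidate} is finite, and we take $\ell\isdef V(0)$ in Proposition \ref{prop:Omega_def}.

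\emph{Convergence.} By Proposition \ref{prop:Omega_def}, $\Omega$ is bounded and positively invariant, because $\dot V\le 0$ by \eqref{eq:V_dot_k2condition} when $k_2=k_1^{-1}$. So the lifted trajectory stays in a compact set. The right-hand side of the closed loop is locally Lipschitz there, since $\SG_2^{-1}$ and $\Phi^{-1}$ exist by Propositions \ref{prop_SG2_invertible} and \ref{prop:PHI_invertible}. The solution therefore exists for all $t\ge0$. LaSalle's theorem then says the trajectory converges to the largest invariant subset of $\{\dot V=0\}\cap\Omega$. By Propositions \ref{prop:E_def} and \ref{prop:M_def}, this subset is characterized by $z_2=0$ and $u=0$, and we claim it reduces to the point $(z_{1\rmd},0)$.

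This reduction is the main obstacle, and I would prove it directly. On an invariant trajectory with $z_2\equiv0$, we have $\dot z_2\equiv0$. Proposition \ref{prop:equil_point_z2_dot} gives $\SF_2(z_1,0)=0$, so \eqref{eq:u} forces $k_2 D(\overline x_2)\Phi^{\rmT}e_2=0$, and nonsingularity of $\Phi$ gives $e_2=0$. From \eqref{eq:e2_def} with $\psi(0)=0$, $e_2=k_1e_1$, so $e_1=0$, i.e.\ $z_1=z_{1\rmd}$. At this point \eqref{eq:u} gives $u=0$. This yields the limits for $z_1$, $z_2$ and $u$, using continuity of $u$ in $(z_1,z_2)$ for the last one.

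\emph{Return to original coordinates.} Because $\psi$ is continuous, \eqref{eq:z1_to_x1} gives $x_1(t)=D(\overline x_1)\psi(D_\rmI(\overline x_1)z_1(t))\to D(\overline x_1)\psi(D_\rmI(\overline x_1)z_{1\rmd})=x_{1\rmd}$, which is \eqref{eq:x1limit}. For forward invariance \eqref{eq:safeset_FI}, note that $z_1(t),z_2(t)$ are finite for every $t$, since they remain in the bounded set $\Omega$. Since $\sigma_i$ maps $\BBR$ into $(-1,1)$, \eqref{eq:z1_to_x1} and \eqref{eq:z2_to_x2} give $|x_{1}(t)|<\overline x_1$ and $|x_2(t)|<\overline x_2$ componentwise. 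This holds for all $t\ge0$, so $(x_1(t),x_2(t))\in\SSS$. The diffeomorphism property of $\phi$ on $\SC$ also ensures that the $z$-trajectory and the $x$-trajectory are equivalent, so no finite escape can occur in either.
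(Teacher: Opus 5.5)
Your proposal is correct and follows essentially the same route as the paper: LaSalle's invariance principle on the bounded, positively invariant sublevel set $\Omega$, with $E=\{z_2=0\}$ and its largest invariant subset $M$, followed by mapping back through $\psi$. You also make explicit two points the paper's proof leaves implicit. First, $M$ collapses to the single point $(z_{1\rmd},0)$, since on $E$ one has $e_2=k_1e_1$ and invariance forces $e_2=0$. Second, global existence and forward invariance of $\SSS$ follow from $z(t)$ staying in the bounded set $\Omega$, not from the bijectivity of $\psi$ alone.
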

\begin{proof}
    Consider the set $\Omega$ defined in Proposition \ref{prop:Omega_def} and the set $E$ defined in Proposition \ref{prop:E_def}. 
    It follows from Proposition \ref{prop:M_def} that $M$ is the largest positively invariant set in $E.$
    It follows from Theorem 2 in \cite{lasalle1960some} that any solution starting in $\Omega$ converge to $M$ as $t\to\infty,$ which implies \eqref{eq:z1limit}--\eqref{eq:ulimit}.
    Next, it trivially follows from \eqref{eq:z1d} and \eqref{eq:z1limit} that $\lim_{t\to\infty}x_1(t) = x_{1\rmd}.$
    Finally, since the map $\psi$ is a bijection, it follows that $\forall t\geq0,
        (x_1(t), x_2(t)) \in \SSS.$
\end{proof}

\jp{
\begin{corollary}[Non-conservative invariance]
Under the conditions of Theorem \ref{thrm:stability}, the proposed control law guarantees forward invariance of the original safe set $\SSS$ directly. 
In particular, invariance is achieved without introducing auxiliary constraints on higher-order states, and the admissible invariant set coincides with the prescribed safe set $\SSS$.
\end{corollary}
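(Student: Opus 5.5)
The plan is to obtain the corollary directly from the forward-invariance conclusion \eqref{eq:safeset_FI} of Theorem \ref{thrm:stability}. The substance is to make explicit why the invariant set is exactly $\SSS$ and not a subset of it. The argument rests on a set identity. The composite coordinate change
\begin{align*}
T\colon (x_1,x_2) \mapsto (z_1,z_2) = \big(D(\overline{x}_1)\phi(D_\rmI(\overline{x}_1)x_1),\, D(\overline{x}_2)\phi(D_\rmI(\overline{x}_2)x_2)\big)
\end{align*}
is a diffeomorphism from $\SSS$ onto all of $\BBR^n\times\BBR^n$. Its inverse is given by \eqref{eq:z1_to_x1}, \eqref{eq:z2_to_x2}.

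First, I establish the surjectivity and bijectivity of $T$. Each $\eta_i$ is a bijection from $(-1,1)$ onto $\BBR$ with a smooth inverse $\sigma_i$, by the proposition on strictly increasing simple sigmoids. The diagonal scalings $D(\overline{x}_j)$ and $D_\rmI(\overline{x}_j)$ are invertible. The normalization $\chi_j = D_\rmI(\overline{x}_j)x_j$ maps $\{|x_j|<\overline{x}_j\}$ exactly onto $\SC$. Hence $T(\SSS)=\BBR^{2n}$ and $T^{-1}(\BBR^{2n})=\SSS$. The nonsingularity of the Jacobian comes from Proposition \ref{rm:partial_invert_sat}.

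Second, I use $T$ to transfer solutions. Any solution $(z_1(t),z_2(t))$ of the closed loop \eqref{eq:z1_dot}, \eqref{eq:z2_dot} under \eqref{eq:u} is defined in $\BBR^{2n}$. By Proposition \ref{prop:Omega_def} it is bounded, since it stays in the sublevel set $\Omega$. Therefore it exists for all $t\ge 0$, and $(x_1(t),x_2(t)) = T^{-1}(z_1(t),z_2(t))\in\SSS$ for all $t\geq 0$. This is \eqref{eq:safeset_FI}. Conversely, every $(x_1(0),x_2(0))\in\SSS$ is an admissible initial condition, because $T(x_1(0),x_2(0))$ is a finite point of $\BBR^{2n}$ and Theorem \ref{thrm:stability} applies to it. So the set of initial states from which invariance and convergence are guaranteed is all of $\SSS$. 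It is neither a strict subset nor an intersection $\SC_0\cap\SC_1$ of the kind shown in Section \ref{sec:double_integrator_example}.

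Third, I address the absence of auxiliary constraints. The only constraints in the design are the bounds $\overline{x}_1,\overline{x}_2$ that define $\SSS$ in \eqref{eq:safe_set}. The Lyapunov function \eqref{eq:LyapCandidate} and the control law \eqref{eq:u} are defined on all of $\BBR^{2n}$ in lifted coordinates, so no derivative-based barrier condition is imposed. In particular, the bound on $x_2$ is exactly the user's bound and is not a state-dependent restriction generated by the design.

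The main obstacle is making precise that the closed-loop trajectory in $z$ cannot escape to infinity in finite time. Such an escape would correspond to $x$ reaching $\partial\SSS$. This is exactly where boundedness of $\Omega$, from radial unboundedness of $V$ via the properties of $\SV$, is essential. I would state it explicitly: $V(t)\le V(0)<\infty$ bounds $\|z(t)\|$ uniformly, and therefore bounds $|\chi_{ji}(t)|$ away from $1$ by $\sigma_i$ evaluated at that bound. This yields strict invariance with a margin, and hence completes the corollary.
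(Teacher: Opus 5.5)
Your proposal is correct and follows the paper's route: the paper gives no separate argument for the corollary and reads it off from \eqref{eq:safeset\_FI}, which it justifies by the bijectivity of the lifting map $\psi$. Your explicit no-finite-escape step, using $V(t)\le V(0)$ and the radial unboundedness of $V$ in $(z_1,z_2)$ alone, supplies what that one-line justification leaves implicit, since bijectivity by itself does not exclude a trajectory reaching $\partial\SSS$ in finite time.
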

}

\jp{\begin{remark}
The stability and forward invariance guarantees established above are nominal and assume exact knowledge of the system dynamics. 
The effects of model uncertainty and external disturbances are not considered in this work and remain important directions for future research.
\end{remark}}


\section{Motivation for the Sigmoid Integral Function as a Lyapunov Candidate}
\label{sec:justification_sigmoid_integral}

The sigmoid integral functions, introduced in Section \ref{sec:sig_integral}, are selected as the Lyapunov candidate functions for control synthesis because the classical quadratic Lyapunov function leads to a control law that exhibits numerical instability arising from specific ill-conditioned terms.
In contrast, employing the sigmoid integral function eliminates these problematic terms, yielding a numerically robust closed-loop system. The following example demonstrates the numerical difficulties associated with the classical quadratic Lyapunov function.
Consider the double integrator \eqref{eq:doubleIntegrator},
where $x_1,x_2,u \in \mathbb{R}.$ 
To simplify presentation, we consider the problem of only constraining $x_2$ with $\overline{x}_2 = 1.$ Note that $x_1$ is unconstrained. 
Following the procedure outlined in Section \ref{sec:state_transformation}, the transformed system in $z$ coordinates is
\begin{align}
    \dot{z}_1
        &=
            \tanh
            \left(
                z_2
            \right),
    \quad
    \dot{z}_2
        =
            \cosh^2
            \left(
                z_2
            \right)
            u.
    \label{eq:z2_dot_di}
\end{align}
Next, following the procedure outlined in Section \ref{sec:ControlSynthesis} with $\phi(x) = \atanh(x)$ as the constraint lifting function, the controller is
\begin{align}
    u
        &=
            -
            k_2
            \mathrm{sech}^2
            \left(
                z_2
            \right)
            \left(
                \tanh
                \left(
                    z_2
                \right)
                +
                k_1
                e_1
            \right),
    \label{eq:proposed_controller_di}
\end{align}
which implies that
\begin{align}
    \dot{z}_2
        &=
            -
            k_2
            \,
            \left(
                \tanh
                \left(
                    z_2
                \right)
                +
                k_1
                e_1
            \right).
    \label{eq:z2dot_proposed}
\end{align}

If, instead of the sigmoid integral function, the classical quadratic function were used, that is,
\begin{align}
    V 
        &= 
            V_1 
            + 
            \half e_2^\rmT e_2,
\end{align}
then
$
    \dot V 
        =
            - 
            k_1 e_1^2
           +
            e_2
            \left(
                e_1
                +
                k_1
                \tanh
                \left(
                    z_2
                \right)
                +
                u
            \right).
$
Furthermore, the control law
\begin{align}
    u 
        &=
            -
            \left(
                e_1
                +
                k_1
                \tanh
                \left(
                    z_2
                \right)
                +
                k_2
                e_2
            \right),
    \label{eq:class_controller_di}
\end{align}
where $k_1, k_2>0$ ensure that 
$\dot V = - k_1 e_1^2 - k_2 e_2^2 \leq 0.$
Moreover, $z_2$ satisfies
\begin{align}
    \dot z_2 
        &= 
            -
                \cosh^2
                \left(
                    z_2
                \right)
                \left(
                    e_1
                    +
                    k_1
                    \tanh
                    \left(
                        z_2
                    \right)
                    +
                    k_2
                    e_2
                \right).
    \label{eq:z2dot_classical}
\end{align}

Note that the variables $e_1$ and $e_2$ for both formulations are exactly the same and are given by $e_1 = z_1 - z_{1\rmd},$ and $e_2 = \tanh\left(z_2\right) + k_1e_1.$
A straightforward Lyapunov analysis shows that the closed-loop dynamics are asymptotically stable in both cases.
However, $\dot z_2$ in \eqref{eq:z2dot_classical} includes the term $\cosh(z_2)^2$, while $\dot z_2$ in \eqref{eq:z2dot_proposed} does not. 
As shown in Figure \ref{fig:problematic_term}, the function $\cosh(z_2)^2$ grows exponentially, implying that $\dot z_2$ in \eqref{eq:z2dot_classical} also increases exponentially, which leads to numerical instability. 

\begin{figure}[ht]
    \centering
    \includegraphics[width=\columnwidth]{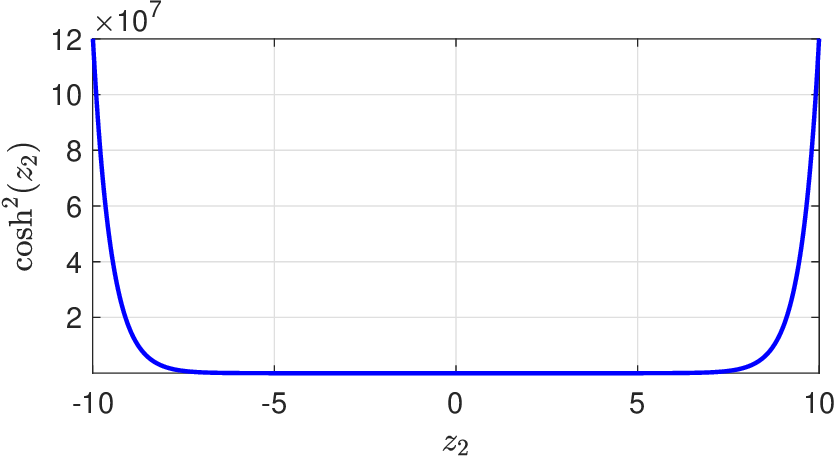}
    \caption{
    The function $\cosh^2(z_2).$ 
    Note the exponential growth in $\cosh^2(z_2).$ 
    For a value of $z_2 \approx 10,$ $\cosh^2(z_2) \approx  10^{7}.$
    }
    \label{fig:problematic_term}
\end{figure}

We emphasize that both controllers theoretically guarantee asymptotic stability, as established by the Lyapunov analysis.
However, the corresponding closed-loop dynamics exhibit numerical instability, which imposes practical limitations on implementation.
This behavior has been consistently observed in multiple numerical investigations with various choices of constraint lifting functions.
The numerical instability arising from the classical quadratic Lyapunov function thus motivates the use of sigmoid integral functions for constructing a numerically robust Lyapunov candidate.

\section{Application to Safe Attitude Control}
\label{sec:example}

This section considers the constrained attitude control problem.  
Although Euler angles offer an intuitive and compact representation of attitude dynamics and are often preferred over quaternions and direction cosine matrices for visualization purposes, they suffer from the well-known \textit{gimbal lock} phenomenon \cite{hemingway2018perspectives}.
Accordingly, the objective of this example is to design a control law that enables tracking of an arbitrary attitude command within the prescribed safe set, while ensuring the body's attitude remains strictly away from gimbal lock configuration. 

Consider a rigid body $\SB$ with a body fixed-frame $\rm F_D.$
Let $\rm F_A$ be an inertial frame. 
Assume that $\rm F_D$ is related to $\rm F_A$ by a 3-2-1 Euler angle sequence, that is, 
\begin{align}
    \rm F_A
        \rotation{\psi}{3}
    \rm F_B
        \rotation{\theta}{2}
    \rm F_C
        \rotation{\phi}{1}
    \rm F_D,
\end{align}
where
$\rm F_B$ is obtained by rotating $\rm F_A$ about its third axis, 
$\rm F_C$ is obtained by rotating $\rm F_B$ about its second axis, and 
$\rm F_D$ is obtained by rotating $\rm F_C$ about its first axis. 
Note that each rotation is an Euler rotation and the corresponding angle is the Euler angle. 

Let $q \isdef \matl \phi & \theta & \psi \matr^\rmT \in \BBR^3$ denote the vector of Euler angles $\phi, \theta,$ and $\psi.$
Then, the orientation of $\SB$ parameterized by $q$ satisfies the Poisson's equation
\begin{align}
    \dot q
        &=
            S(q)\inv \omega, 
    \label{eq:q_dot}    
\end{align}
where $\omega \in \BBR^3$ is the angular velocity vector of $\rm F_D$ relative to $\rm F_A$ measured in the body-fixed frame $\rm F_D$ and 
\begin{align}
S(q)
\isdef
    \matl 
        1&0&-\sin\theta \\
        0&\cos\phi &(\sin\phi)\cos\theta\\
        0&-\sin\phi &(\cos\phi)\cos\theta 
    \matr.
\label{eq:S_theta_for_321}
\end{align}
Note that, if $\theta \neq \pm \pi/2,$ then $S(q)$ is nonsingular. 
Otherwise, the inverse of $S(q),$ is given by 
\begin{align}
    S(q)\inv
        =
            \matl 
                1 & \sin(\phi) \tan(\theta) & \cos(\phi) \tan(\theta) \\
                0 & \cos(\phi) & -\sin(\phi) \\
                0 & \sin(\phi) \sec(\theta) & \cos(\phi) \sec(\theta) 
            \matr.
\end{align}
Note that the gimbal lock arises from the singularity of the matrix $S(q)$ at $\theta = \pm \pi/2$.
As $\theta$ approaches $\pm \pi/2,$ certain trigonometric terms in $S(q)$ become unbounded,  which can result in ill-conditioned computations and undesirable controller behavior.
The control objective is thus to enforce the constraint $\theta \in [-\overline \theta, \overline{ \theta}],$ where $\overline \theta< \pi/2.$

Next, the angular velocity $\omega$ satisfies the Euler's equation
\begin{align}
    J \dot \omega + \omega \times J \omega 
        =
            \tau,
    \label{eq:omega_dot}
\end{align}
where
$J \in \BBR^{3\times 3}$ is the moment of inertia matrix of $\SB$ in $\rm F_D$
and 
$\tau \in \BBR^3$ is the external torque applied to $\SB$ as measured in $\rm F_D.$

The attitude dynamics \eqref{eq:q_dot}, \eqref{eq:omega_dot} is in the form similar to \eqref{eq:x1_dot}, \eqref{eq:x2_dot}, where 
$x_1 = q,$ $x_2 = \omega,$
\begin{align}
    g_1(x_1) &= S(x_1)\inv,                     \\
    f_2(x_1,x_2) &= J \inv x_2 \times J x_2,     \\
    g_2(x_1,x_2) &= J\inv.
\end{align}
Note that Assumptions  \ref{ass:function_zero} and \ref{ass:function_invert} are satisfied.

In this numerical example, the safe set is defined by  
$\overline{q} \isdef 
            \matl
                \pi/3
                &
                \pi/3
                &
                \pi/3
            \matr^\rmT,$
and 
$\overline{\omega}
        \isdef
            \matl
                0.03
                &
                0.02
                &
                0.01
            \matr^\rmT.
$
These constraints imply that each of the Euler angles should remain smaller than $\pi/3$ radians in magnitude and the components of the angular velocity vector should not exceed $0.03$ $\rm rad/s,$ $0.02$ $\rm rad/s,$ and $0.01$ $\rm rad/s,$ respectively in magnitude.

To simulate the attitude dynamics, the body-fixed frame $\rm F_D$ is assumed to coincide with the principal-axis frame.
Consequently, the inertia matrix is diagonal, that is, $J = {\rm diag}(J_1, J_2, J_3),$ where we set $J_1 = 1,$ $J_2 = 2,$ and $J_3 = 3$ in this example. 
The initial conditions to simulate \eqref{eq:q_dot} and \eqref{eq:omega_dot} and the attitude commands in $\SSS$ are randomly generated from a uniform distribution over $\SSS$.
In particular, we randomly generate ten initial conditions and attitude commands. 
%

The attitude controller is designed following the procedure outlined in Section \ref{sec:controller}, with the control law given by \eqref{eq:u}. 
In particular, we use the $\phi(x) = \mathrm{atanh}(x)$ as the constraint-lifting function. 
Consequently, $\psi(z)$ and $\SV(\zeta)$ are given by the second line in Table \ref{tab:sat_functions}.
In this numerical example, the control gain is set to $k_1 = 0.1$.

\jp{Figure \ref{fig:attitude_position_random} presents a geometric illustration of the closed-loop Euler angle trajectories under multiple initial conditions and reference commands. %
The pink-shaded region represents the prescribed safe set, and all trajectories remain confined within this set, demonstrating forward invariance. %
For each trajectory, the initial attitude is marked by a triangle, and the corresponding desired value is indicated by a square of the same color.}
\begin{figure}[h]
    \centering
    \includegraphics[width=\columnwidth, trim={7.0cm 0 6.5cm 0}, clip]{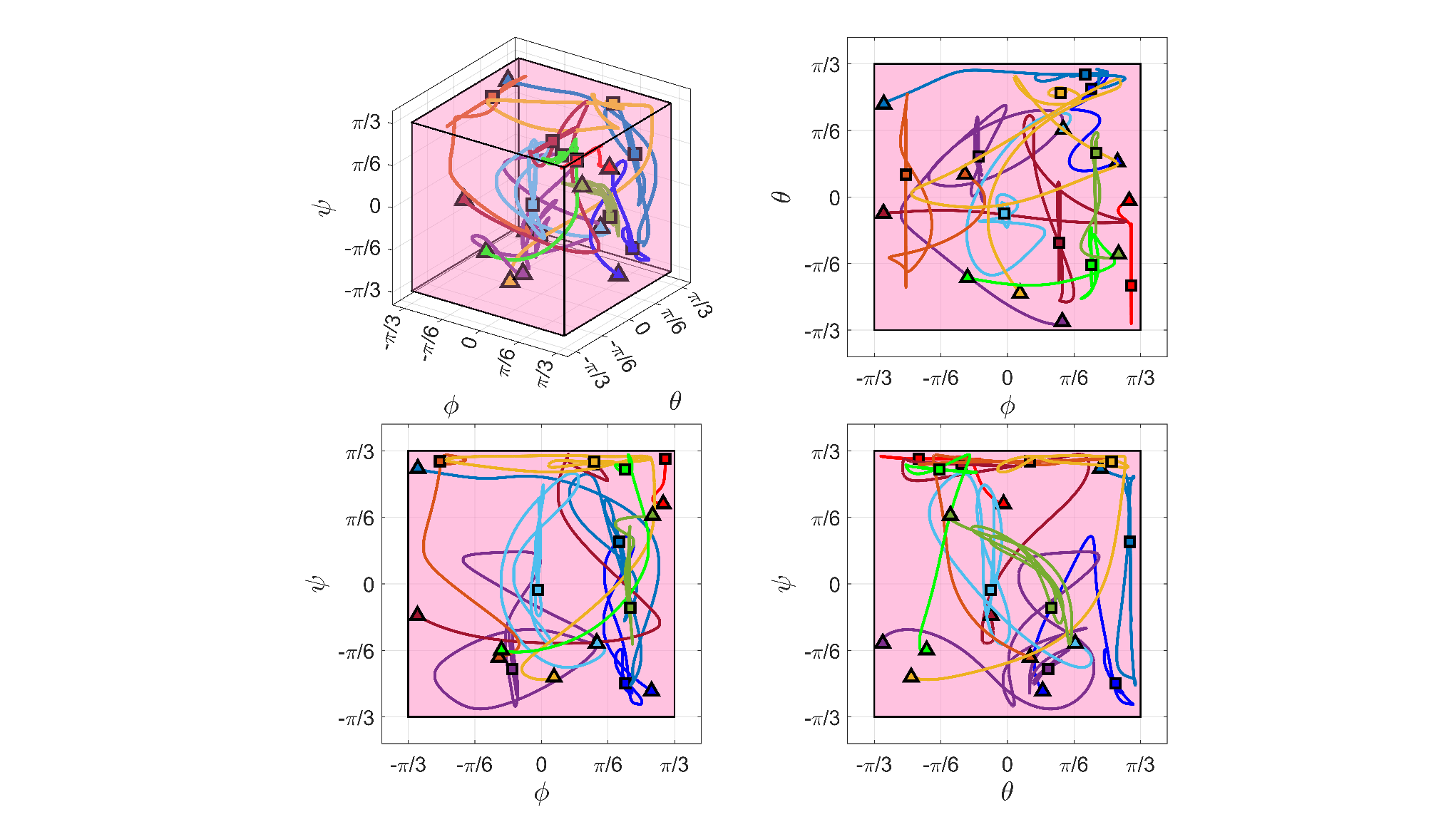}
    \caption{
    \jp{Geometric illustration of forward invariance for the closed-loop system under multiple initial conditions and reference commands. 
    The trajectories remain confined within the prescribed safe set $\SSS$ (shown in pink). 
    Desired angle values are marked with squares, and initial conditions are marked with triangles.}
    }
    \label{fig:attitude_position_random}
\end{figure}

\jp{Figure \ref{fig:attitude_velocity_random} presents a geometric illustration of the closed-loop angular velocity trajectories under multiple initial conditions. %
The yellow-shaded region represents the prescribed safe set, and all trajectories remain confined within this set, demonstrating forward invariance. %
The initial angular velocities are marked by triangles, and the desired equilibrium (zero angular velocity) is marked at the origin. %
As expected, all trajectories converge to zero.}
\begin{figure}[h]
    \centering
    \includegraphics[width=\columnwidth, trim={0.9cm 0 1.5cm 0}, clip]{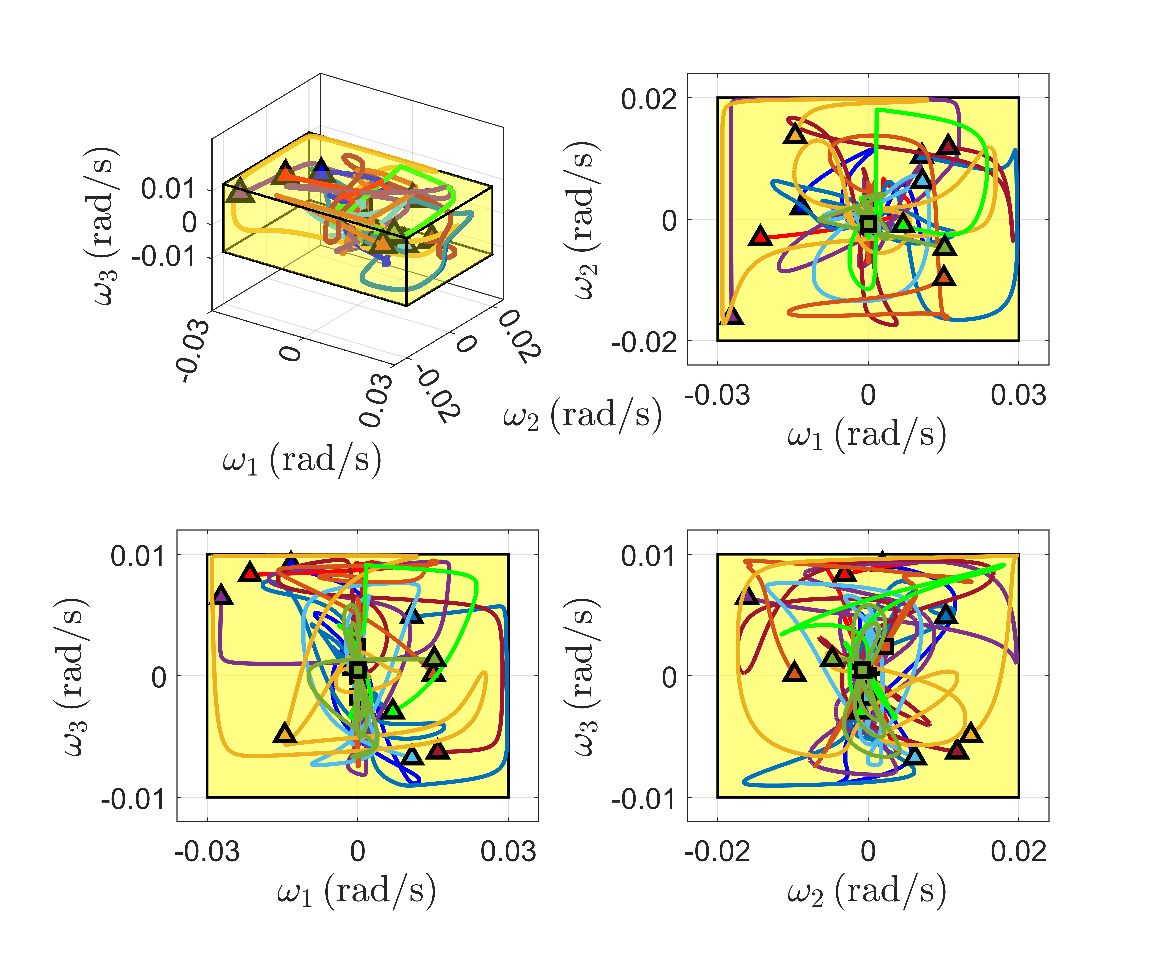}
    \caption{
    \jp{
    Geometric illustration of forward invariance for the closed-loop angular velocity trajectories under multiple initial conditions. %
    The trajectories remain confined within the prescribed safe set $\SSS$ (shown in yellow). %
    Initial conditions are marked with triangles.
    }
    }
    \label{fig:attitude_velocity_random}
\end{figure}

Finally, Figure \ref{fig:attitude states_control} shows the time responses of the closed-loop Euler angles, angular velocities, and control torques for the ten trajectories.
The first column shows the Euler angle responses, the second column shows the angular velocity responses, and the third column shows the control torque signals.
The pink- and yellow-shaded regions in the first and second columns, respectively, denote the corresponding safe sets.
\begin{figure}[h]
    \centering
    \includegraphics[width=\columnwidth, trim={2.0cm 2.5 2.0cm 0}, clip]{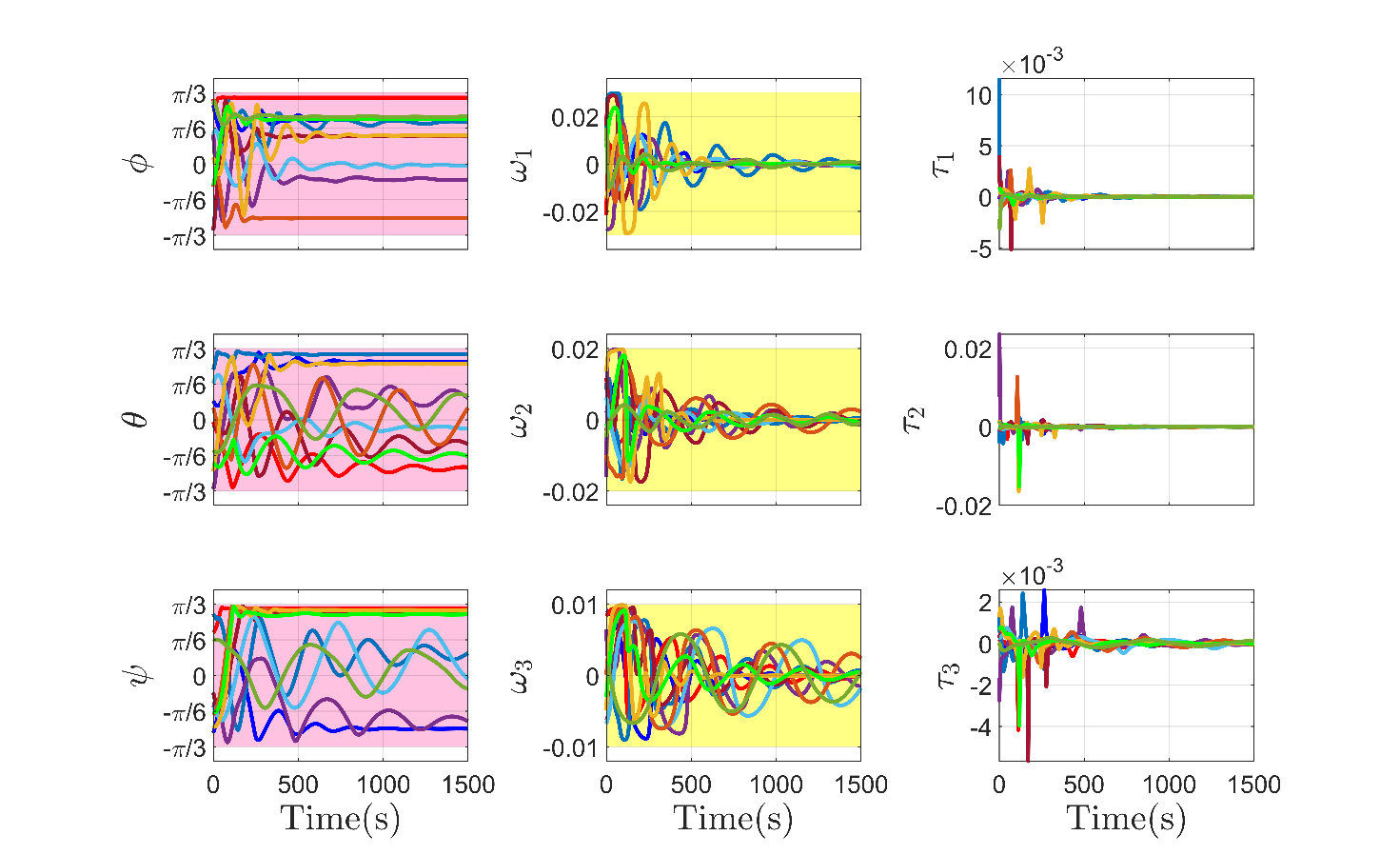}
    \caption{
    Closed-loop responses of the attitude dynamics: angles $(\phi,\theta,\psi)$ and angular velocities $(\omega_1,\omega_2,\omega_3)$ over time for randomly initialized and commanded trajectories, and control inputs $(\tau_1,\tau_2,\tau_3)$ generated by the control law \eqref{eq:u}. Note that the angle and angular responses are always contained within the desired safe set $\SSS.$
    }
    \label{fig:attitude states_control}
\end{figure}

\section{Conclusions}
\label{sec:conclusions}

This paper presented a constraint-enforcing control framework that guarantees closed-loop stability for a class of nonlinear systems while ensuring the positive invariance of a prescribed safe set.
\jp{
A key outcome of this work is that forward invariance of the original safe set can be enforced for constraints with relative degree greater than one, without introducing auxiliary conditions on higher-order states.
}
\jp{
In contrast to higher-order barrier constructions, which typically enforce invariance on a modified set defined by the intersection of multiple constraints, the proposed approach directly preserves the original constraint set, thereby avoiding the conservatism associated with derivative-based conditions.
}
The proposed approach transforms the constrained control problem into an equivalent unconstrained one through \textit{constraint-lifting functions} constructed using strictly increasing sigmoid functions.
A class of Lyapunov candidate functions, called \textit{sigmoid integral functions}, was introduced. 
These functions behave quadratically near the origin and linearly far away from it, a key property that eliminates exponentially growing terms that lead to numerical instability in the classical Lyapunov-based controller design.
The sigmoid integral functions are then employed in the controller synthesis to design a stabilizing feedback law for the transformed system.
A rigorous theoretical analysis establishes the asymptotic stability of the resulting closed-loop system.
\jp{
The development in this paper focuses on a two-level system structure for clarity of presentation. Nevertheless, the proposed framework extends to systems with higher relative degree, in which the control synthesis follows a recursive structure analogous to backstepping, thereby increasing design complexity.
}
A numerical simulation demonstrates the effectiveness of the proposed technique for a constrained attitude control problem.

\appendices
\section{Dynamics in \texorpdfstring{$z$}{z} Coordinate}
\label{ap:derivation of z_dynamics}
It follows from \eqref{eq:x1_to_z1} that 
\begin{align}
    \dot{z}_1
        &=
            D(\overline{x}_1)
            \partial_{\chi_1} 
            \phi(\chi_1) 
            \dot \chi_1
        \nn \\ 
        &=
            D(\overline{x}_1)
            \partial_{\chi_1} 
            \phi(\chi_1) 
            D_{\rmI}(\overline{x}_1)
            \dot x_1
        \nn \\ 
        &=
            \partial_{\chi_1} 
            \phi(\chi_1)
            g_1(x_1)
            x_2
        \nn \\ 
        &=
            \partial_{\chi_1} 
            \phi(\psi_1(\zeta_1)) 
            g_1(D(\overline{x}_1)\psi_1(\zeta_1))
            D(\overline{x}_2)
            \psi_2(\zeta_2)
        \nn \\
        &=
            \Phi(\zeta_1) 
            \psi_2(\zeta_2)
        \nn \\
        &=
            \SG_1(z_1,z_2).
\end{align}
Similarly, it follows from \eqref{eq:x2_to_z2} that 
\begin{align}    
    \dot{z}_2
        &=
            D(\overline{x}_2)
            \partial_{\chi_2} 
            \phi(\chi_2) 
            \dot \chi_2
        \nn \\ 
        &=
            \partial_{\chi_2} 
            \phi(\chi_2)
            D(\overline{x}_2)
            D_{\rmI}(\overline{x}_2)
            \dot x_2
        \nn \\ 
        &=
            \partial_{\chi_2} 
            \phi(\chi_2)
            \left(
                f_2(x_1, x_2) 
                + 
                g_2(x_1,x_2) u
            \right)
        \nn \\ 
        &=
            \partial_{\chi_2} 
            \phi(\psi_2(\zeta_2))
            \Big(
                f_2
                (
                    D(\overline{x}_1)
                    \psi_1(\zeta_1),
                    D(\overline{x}_2)
                    \psi_2(\zeta_2)
                ) 
        \nn\\
        & \quad
                + 
                g_2
                (
                    \jp{D(\overline{x}_1)}\psi_1(\zeta_1),
                    D(\overline{x}_2)\psi_2(\zeta_2)
                ) 
                u 
            \Big)
        \nn \\
        &=
            \SF_2(z_1,z_2)  + \SG_2(z_1,z_2) u.
\end{align}

\section{Time-derivative of Sigmoid Integral-based Lyapunov Function}
\label{ap:appendix_sigmoid integral definition}
Note that 
\begin{align}
    \rmd_t 
    \sum_{i=1}^n
            \SV(\zeta_{2i})
                &=
                    \sum_{i=1}^n
                   \rmd_t
                   \int_0^{\zeta_{2i}}
                   \sigma(s)
                   \rmd s
\end{align}
For each $i = 1, \ldots, n,$ it follows from the Leibniz's formula that
\begin{align}
    \rmd_t
       \int_0^{\zeta_{2i}}
       \sigma(s)
       \rmd s
        =
            \sigma(\zeta_{2i})
            \dot{\zeta}_{2i},
\end{align}
which, using \eqref{eq:zeta2_def}, implies that
\begin{align}
    \rmd_t 
    \sum_{i=1}^n
            \SV(\zeta_{2i})
                &=
                    \sum_{i=1}^n
                   \sigma(\zeta_{2i})
                   \dot{\zeta}_{2i}
                =
                    \psi(\zeta)^\rmT
                   \dot{\zeta}_{2}
                \nn \\
                &=
                    \psi(\zeta_2)^\rmT
                    D_{\rmI}(\overline{x}_2)\dot{z}_2. 
\end{align}

\bibliographystyle{IEEEtran}
\bibliography{Bib/b}

\begin{thebibliography}{10}
\providecommand{\url}[1]{#1}
\csname url@samestyle\endcsname
\providecommand{\newblock}{\relax}
\providecommand{\bibinfo}[2]{#2}
\providecommand{\BIBentrySTDinterwordspacing}{\spaceskip=0pt\relax}
\providecommand{\BIBentryALTinterwordstretchfactor}{4}
\providecommand{\BIBentryALTinterwordspacing}{\spaceskip=\fontdimen2\font plus
\BIBentryALTinterwordstretchfactor\fontdimen3\font minus \fontdimen4\font\relax}
\providecommand{\BIBforeignlanguage}[2]{{%
\expandafter\ifx\csname l@#1\endcsname\relax
\typeout{** WARNING: IEEEtran.bst: No hyphenation pattern has been}%
\typeout{** loaded for the language `#1'. Using the pattern for}%
\typeout{** the default language instead.}%
\else
\language=\csname l@#1\endcsname
\fi
#2}}
\providecommand{\BIBdecl}{\relax}
\BIBdecl

\bibitem{graichen2010handling}
K.~Graichen, A.~Kugi, N.~Petit, and F.~Chaplais, ``Handling constraints in optimal control with saturation functions and system extension,'' \emph{Systems \& Control Letters}, vol.~59, no.~11, pp. 671--679, 2010.

\bibitem{garone2017reference}
E.~Garone, S.~Di~Cairano, and I.~Kolmanovsky, ``Reference and command governors for systems with constraints: A survey on theory and applications,'' \emph{Automatica}, vol.~75, pp. 306--328, 2017.

\bibitem{wang2023balanced}
P.~Wang, H.~Wang, X.~Zhang, and S.~S. Ge, ``Balanced control between performance and saturation for constrained nonlinear systems,'' \emph{International Journal of Robust and Nonlinear Control}, vol.~33, no.~10, pp. 5675--5690, 2023.

\bibitem{gilbert1995discrete}
E.~G. Gilbert, I.~Kolmanovsky, and K.~T. Tan, ``Discrete-time reference governors and the nonlinear control of systems with state and control constraints,'' \emph{International Journal of robust and nonlinear control}, vol.~5, no.~5, pp. 487--504, 1995.

\bibitem{gilbert1999fast}
E.~G. Gilbert and I.~Kolmanovsky, ``Fast reference governors for systems with state and control constraints and disturbance inputs,'' \emph{International Journal of Robust and Nonlinear Control: IFAC-Affiliated Journal}, vol.~9, no.~15, pp. 1117--1141, 1999.

\bibitem{kim2019backstepping}
Y.~Kim, T.~H. Oh, T.~Park, and J.~M. Lee, ``Backstepping control integrated with lyapunov-based model predictive control,'' \emph{Journal of Process Control}, vol.~73, pp. 137--146, 2019.

\bibitem{kim2018integration}
Y.~Kim, T.~Park, and J.~M. Lee, ``Integration of model predictive control and backstepping approach and its stability analysis,'' \emph{IFAC-PapersOnLine}, vol.~51, no.~18, pp. 405--410, 2018.

\bibitem{SCHLAGENHAUF20207026}
\BIBentryALTinterwordspacing
J.~Schlagenhauf, P.~Hofmeier, T.~Bronnenmeyer, R.~Paelinck, and M.~Diehl, ``Cascaded nonlinear mpc for realtime quadrotor position tracking⁎⁎this research has been funded by the company kiteswarms in the kite project (zvk2017022302) with the university of freiburg.'' \emph{IFAC-PapersOnLine}, vol.~53, no.~2, pp. 7026--7032, 2020, 21st IFAC World Congress. [Online]. Available: \url{https://www.sciencedirect.com/science/article/pii/S2405896320307357}
\BIBentrySTDinterwordspacing

\bibitem{wang2020}
D.~Wang, C.~Zhao, J.~Hu, and Q.~Pan, ``Model predictive path following control of a quadrotor in constrained environments,'' in \emph{2020 IEEE 16th International Conference on Control and Automation (ICCA)}, 2020, pp. 719--724.

\bibitem{li2023adaptive}
J.~Li, L.~Wan, J.~Li, and K.~Hou, ``Adaptive backstepping control of quadrotor uavs with output constraints and input saturation,'' \emph{Applied Sciences}, vol.~13, no.~15, p. 8710, 2023.

\bibitem{tee2009barrier}
K.~P. Tee, S.~S. Ge, and E.~H. Tay, ``Barrier lyapunov functions for the control of output-constrained nonlinear systems,'' \emph{Automatica}, vol.~45, no.~4, pp. 918--927, 2009.

\bibitem{soukkou2023tuning}
Y.~Soukkou, M.~Tadjine, A.~Soukkou, M.~Nibouche, and H.~Nouri, ``Tuning functions based adaptive backstepping control for uncertain strict-feedback nonlinear systems using barrier lyapunov functions with full state constraints,'' \emph{European Journal of Control}, vol.~70, p. 100783, 2023.

\bibitem{ngo2005integrator}
K.~B. Ngo, R.~Mahony, and Z.-P. Jiang, ``Integrator backstepping using barrier functions for systems with multiple state constraints,'' in \emph{Proceedings of the 44th IEEE Conference on Decision and Control}.\hskip 1em plus 0.5em minus 0.4em\relax IEEE, 2005, pp. 8306--8312.

\bibitem{niu2024adaptive}
B.~Niu, X.~Wang, X.~Wang, X.~Wang, and T.~Li, ``Adaptive barrier-lyapunov-functions based control scheme of nonlinear pure-feedback systems with full state constraints and asymptotic tracking performance,'' \emph{Journal of Systems Science and Complexity}, vol.~37, no.~3, pp. 965--984, 2024.

\bibitem{Sadeghzadeh2023}
N.~Sadeghzadeh-Nokhodberiz and N.~Meskin, ``Consensus-based distributed formation control of multi-quadcopter systems: Barrier lyapunov function approach,'' \emph{IEEE Access}, vol.~11, pp. 142\,916--142\,930, 2023.

\bibitem{yin2019barrier}
Z.~Yin, B.~Wang, C.~Du, and Y.~Zhang, ``Barrier-lyapunov-function-based backstepping control for pmsm servo system with full state constraints,'' in \emph{2019 22nd International Conference on Electrical Machines and Systems (ICEMS)}.\hskip 1em plus 0.5em minus 0.4em\relax IEEE, 2019, pp. 1--5.

\bibitem{li2020barrier}
Y.-X. Li, ``Barrier lyapunov function-based adaptive asymptotic tracking of nonlinear systems with unknown virtual control coefficients,'' \emph{Automatica}, vol. 121, p. 109181, 2020.

\bibitem{tee2011control}
K.~P. Tee and S.~S. Ge, ``Control of nonlinear systems with partial state constraints using a barrier lyapunov function,'' \emph{International Journal of Control}, vol.~84, no.~12, pp. 2008--2023, 2011.

\bibitem{di2014stabilizing}
S.~Di~Cairano, W.~M.~H. Heemels, M.~Lazar, and A.~Bemporad, ``Stabilizing dynamic controllers for hybrid systems: A hybrid control lyapunov function approach,'' \emph{IEEE Transactions on Automatic Control}, vol.~59, no.~10, pp. 2629--2643, 2014.

\bibitem{zhu2024adaptive}
B.~Zhu, N.~Xu, G.~Zong, and X.~Zhao, ``Adaptive optimized backstepping tracking control for full-state constrained nonlinear strict-feedback systems without using barrier lyapunov function method,'' \emph{Optimal Control Applications and Methods}, vol.~45, no.~5, pp. 2051--2075, 2024.

\bibitem{taylor2022safe}
A.~J. Taylor, P.~Ong, T.~G. Molnar, and A.~D. Ames, ``Safe backstepping with control barrier functions,'' in \emph{2022 IEEE 61st Conference on Decision and Control (CDC)}.\hskip 1em plus 0.5em minus 0.4em\relax IEEE, 2022, pp. 5775--5782.

\bibitem{Kim2023}
J.~Kim and Y.~Kim, ``Safe control synthesis for multicopter via control barrier function backstepping,'' in \emph{2023 62nd IEEE Conference on Decision and Control (CDC)}, 2023, pp. 8720--8725.

\bibitem{Kim2025}
K.~H. Kim, M.~Diagne, and M.~Krstić, ``Constant-sum high-order barrier functions for safety between parallel boundaries,'' \emph{IEEE Control Systems Letters}, vol.~9, pp. 1447--1452, 2025.

\bibitem{romdlony2016stabilization}
M.~Z. Romdlony and B.~Jayawardhana, ``Stabilization with guaranteed safety using control lyapunov--barrier function,'' \emph{Automatica}, vol.~66, pp. 39--47, 2016.

\bibitem{ames2019control}
A.~D. Ames, S.~Coogan, M.~Egerstedt, G.~Notomista, K.~Sreenath, and P.~Tabuada, ``Control barrier functions: Theory and applications,'' in \emph{2019 18th European control conference (ECC)}.\hskip 1em plus 0.5em minus 0.4em\relax Ieee, 2019, pp. 3420--3431.

\bibitem{wu2019control}
Z.~Wu, F.~Albalawi, Z.~Zhang, J.~Zhang, H.~Durand, and P.~D. Christofides, ``Control lyapunov-barrier function-based model predictive control of nonlinear systems,'' \emph{Automatica}, vol. 109, p. 108508, 2019.

\bibitem{nagumo1942lage}
M.~Nagumo, ``\BIBforeignlanguage{German}{Über die lage der integralkurven gewöhnlicher differentialgleichungen},'' \emph{\BIBforeignlanguage{German}{Nippon Sugaku-Butsurigakkwai Kizi Dai 3 Ki}}, 1942.

\bibitem{blanchini1999set}
F.~Blanchini, ``Set invariance in control,'' \emph{Automatica}, vol.~35, no.~11, pp. 1747--1767, 1999.

\bibitem{ames2016control}
A.~D. Ames, X.~Xu, J.~W. Grizzle, and P.~Tabuada, ``Control barrier function based quadratic programs for safety critical systems,'' \emph{IEEE Transactions on Automatic Control}, vol.~62, no.~8, pp. 3861--3876, 2016.

\bibitem{ong2024rectified}
P.~Ong, M.~H. Cohen, T.~G. Molnar, and A.~D. Ames, ``Rectified control barrier functions for high-order safety constraints,'' \emph{IEEE Control Systems Letters}, 2024.

\bibitem{guo2014backstepping}
T.~Guo and X.~Wu, ``Backstepping control for output-constrained nonlinear systems based on nonlinear mapping,'' \emph{Neural Computing and Applications}, vol.~25, pp. 1665--1674, 2014.

\bibitem{huang2020output}
X.~Huang, Y.~Song, and C.~Wen, ``Output feedback control for constrained pure-feedback systems: A non-recursive and transformational observer based approach,'' \emph{Automatica}, vol. 113, p. 108789, 2020.

\bibitem{sampei2003nonlinear}
M.~Sampei, J.~Shen, and N.~H. McClamroch, ``Nonlinear control of the air spindle testbed with constraints,'' in \emph{Proceedings of the 2003 American Control Conference, 2003.}, vol.~1.\hskip 1em plus 0.5em minus 0.4em\relax IEEE, 2003, pp. 483--488.

\bibitem{menon1996characterization}
A.~Menon, K.~Mehrotra, C.~K. Mohan, and S.~Ranka, ``Characterization of a class of sigmoid functions with applications to neural networks,'' \emph{Neural networks}, vol.~9, no.~5, pp. 819--835, 1996.

\bibitem{lasalle1960some}
J.~LaSalle, ``Some extensions of liapunov's second method,'' \emph{IRE Transactions on circuit theory}, vol.~7, no.~4, pp. 520--527, 1960.

\bibitem{hemingway2018perspectives}
E.~G. Hemingway and O.~M. O’Reilly, ``Perspectives on euler angle singularities, gimbal lock, and the orthogonality of applied forces and applied moments,'' \emph{Multibody system dynamics}, vol.~44, no.~1, pp. 31--56, 2018.

\end{thebibliography}

\begin{IEEEbiography}[{\includegraphics[width=1in,height=1.25in,clip,keepaspectratio]{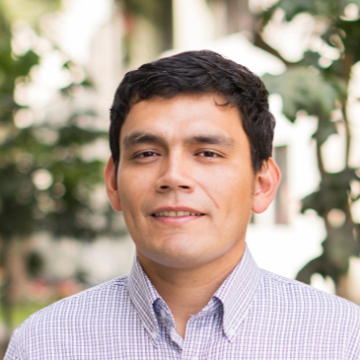}}]{Jhon Portella} received the B.Sc. degree in Mechanical Engineering from the Pontificia Universidad Católica del Perú (PUCP), Lima, Peru, in 2013, and the M.Eng. degree in Mechatronics Engineering from the University of Melbourne, Melbourne, Australia, in 2016. He is currently pursuing a Ph.D. degree in Mechanical Engineering at the University of Maryland, Baltimore County, Baltimore, MD, USA. His research interests include nonlinear and adaptive control, as well as the use of optimization and machine learning techniques for control design.
\end{IEEEbiography}
\begin{IEEEbiography}[{\includegraphics[width=1in,height=1.25in,clip,keepaspectratio]{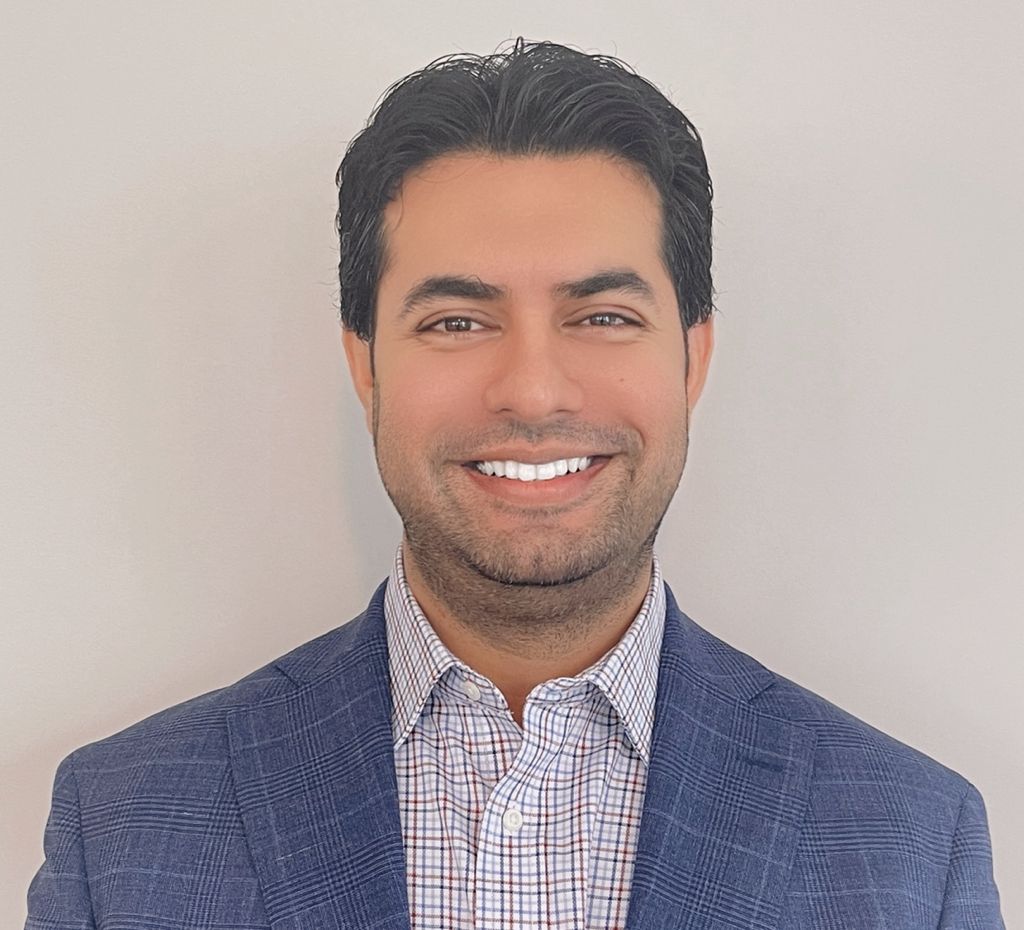}}]{Ankit Goel} (Member, IEEE) received the B.E. degree in Mechanical Engineering from Delhi College of Engineering, Delhi, in 2009, and the M.S. and Ph.D. degrees in Aerospace Engineering from The University of Michigan, Ann Arbor, MI, USA, in 2014 and 2019, respectively. 
He is currently an Assistant Professor in the Department of Mechanical Engineering at the University of Maryland, Baltimore County, where he directs the Estimation, Control, and Learning Laboratory (ECLL). His research interests include model-free, data-driven, and learning-based adaptive control, as well as model-based methods for complex dynamical systems.
\end{IEEEbiography}

\end{document}